\newcommand{\bigo}{\mathcal{O}}
\newcommand{\tbigo}{\widetilde \bigo}
\newcommand{\Z}{\mathbb{Z}}
\newcommand{\F}{\mathbb{F}}
\newcommand{\legendre}[2]{\genfrac{(}{)}{}{}{#1}{#2}}
\newcommand{\generated}[1]{\ensuremath{\langle #1\rangle}}
\newtheorem{theorem}{Theorem}[section]
\newtheorem{prop}[theorem]{Proposition}
\newtheorem{lemma}[theorem]{Lemma}
\newtheorem{cor}[theorem]{Corollary}
\newtheorem{example}[theorem]{Example}
\newtheorem{remark}[theorem]{Remark}
\newtheorem{definition}[theorem]{Definition}
\newtheorem{assumptions}[theorem]{Assumptions}
\newtheorem*{acknowledgements*}{Acknowledgements}
\numberwithin{equation}{section}
\title{Fast square-free decomposition of integers using class groups}
\author{Erik Mulder}
\date{July 2023}
\begin{document}

\maketitle

\begin{abstract}
    Let $n=a^2b$, where $b$ is square-free. In this paper we present an algorithm based on class groups of binary quadratic forms that finds the square-free decomposition of $n$, i.e. $a$ and $b$, in heuristic expected time:
    \[ \tbigo{(L_{b}[1/2,1] \ln(n) + L_{b}[1/2,1/2] \ln(n)^2)}. \]
    If $a,b$ are both primes of roughly the same cryptographic size, then our method is currently the fastest known method to factor $n$.
    This has applications in cryptography, since some cryptosystems rely on the hardness of factoring integers of this form.
\end{abstract}

\newpage
\tableofcontents
\newpage

\section{Introduction}



\noindent One of the classic questions in computational number theory is whether the square-free decomposition of a given mathematical object can be computed `fast'.
The question for polynomials is easy to solve: given a polynomial $f = f_1f_2^2$, where $f_1$ is square-free, we can compute $g = \gcd(f,f')$. Then $g$ is a multiple of $f_2$, and with some more care, the exact square factor $f_2$ can be found in polynomial time \cite{yun_square_polynomials}.
For integers however, the question is still open. In this paper we present a novel technique that uses class groups to find the square-free decomposition of an integer $n = a^2b$, with $b$ square-free.
The algorithm runs in heuristic expected time 
\[ \tbigo{(L_{b}[1/2,1] \ln(n) + L_{b}[1/2,1/2] \ln(n)^2)} \]
where $L_b$ is the $L$-notation:
\[ L_b[\alpha,c] = e^{(c + o(1)) \ln(b)^{\alpha} (\ln\ln(b))^{1-\alpha}} \]


In the special case that $a=p$ and $b=q$ are distinct primes and $p \approx q$, our algorithm is currently the fastest known method for computing the square-free decomposition if $q$ is roughly in the range $[10^{20}, 10^{5000}]$. The upper bound should be taken with a grain of salt, see Section \ref{ss: complexity comparisons}.
Numbers $n$ of this form might seem like a very specific case.
It is true that the probability that a random large integer $n$ is of this form is very low.
However, these numbers appear quite frequently in cryptographic systems \cite{crypto_okamoto_1998, crypto_okamoto_1990, crypto_paulus, crypto_schmidt, crypto_takagi_rsa}.
In these cryptosystems, the assumption is usually made that factoring an integer $n =  p^2q$ with $p \approx q $ is as hard as factoring an arbitrary integer with 3 large prime factors. Our method shows that this is not true when $q$ is of cryptographic size. Therefore, larger moduli should be used in these cryptosystems than advised in those articles.

Another application of our algorithm is to determine the ring of integers of number fields, since this is polynomial time equivalent to finding the square-free decomposition of the discriminant of the number field \cite{chistov_ring_of_integers}.
In \cite{lenstra_buchmann_ring_of_integers}, this equivalence is studied in great detail.
Unfortunately, our algorithm does not run in polynomial time, but it can be useful when the square-free part of the discriminant is not too big.
Another possible application is in determining the endomorphism ring of an elliptic curve over a finite field \cite{endomorphism_ring}.
Since that algorithm requires the square part of the discriminant of the characteristic equation of the Frobenius endomorphism.

Factoring integers that have repeated prime factors has been an area of active research for quite some time.
Lattice algorithms are quite popular \cite{boneh_factoring_prq, coron_factoring_prqs, may_factoring_p^rq, harvey_factoring_p^rq}, but also elliptic curves \cite{peralta_ecm_p2q} and even class groups \cite{Factoring_pq2, truc_met_r} have been used before.
In \cite{detectingsquarefree} an algorithm is presented that also uses class groups, which can be used to detect square-free numbers.
Finally, it is good to mention that general purpose factorization algorithms such as the number field sieve \cite{lenstra_nfs} or the elliptic curve method \cite{lenstraECM} can of course also factor numbers of this form.

In 1984, Schnorr and Lenstra \cite{schnorrlenstra} presented the following algorithm to factor an integer $n$, which is very similar to other factorization algorithms such as the elliptic curve method and Pollard's $p-1$ method.
Take a random form $f$ in the class group $C(-4n)$ and compute $g = f^k$, where $k$ is a large highly composite integer.
If $h(-4n)$ is smooth, then $g \sim e$, where $e$ is the identity element of $C(-4n)$.
They showed how to factor $n$ in this case.
Based on heuristic assumptions, they claimed that the expected runtime of their algorithm is $\bigo(L_n[1/2,1])$.
Unfortunately, it was later found that if $n$ has a large square prime divisor, then this runtime is out of reach \cite{lenstra_flaw}.

We adapt the algorithm of Schnorr and Lenstra such that it actually becomes \emph{faster} when $n = a^2b$ has a large square divisor.
We will use the fact that every form in $g \in C(-4n)$ can be derived from a unique form $\pi(g) \in C(-4b)$.
We don't require that $g \sim e \in C(-4n)$, but instead that $\pi(g) \sim e \in C(-4b)$.
Given such $g$, we show how to retrieve $a$, which gives the square-free decomposition of $n$.
Our method is successful if the class number $h(-4b)$ is smooth. If $b$ is not too big, then the probability that this happens is much larger than the probability that $h(-4n)$ is smooth.

\subsection{Outline of this paper}
In Chapter 2 we will recall some basic properties of binary quadratic forms and how forms in $C(-4a^2b)$ can be derived from forms in $C(-4b)$.
In Chapters 3 and 4, we look at the factorization algorithm of Schnorr and Lenstra \cite{schnorrlenstra} and we adapt it to make it fast for integers of the form $n = p^2q$.
In Chapter 5 we compare our algorithm to other factorization algorithms and we test its speed in practice.

This article also has four appendices.
In Appendix \ref{appendix: composite square factor} we show that our algorithm works more generally for integers $n = a^2b$, where $a$ and $b$ can both be composite.
Appendices \ref{appendix: multipliers} and \ref{appendix: stage 2} are more experimental in nature, there we try to find adjustments that speed up our algorithm.
In Appendix \ref{appendix: implementation} we provide a basic implementation of our algorithm in \textsc{Magma} \cite{magma}.

\begin{acknowledgements*}
    \normalfont
    The author would like to thank Wieb Bosma for the helpful discussions and for proofreading this paper.
\end{acknowledgements*}

\section{Class group of binary quadratic forms}

\subsection{Preliminaries}
In this section we quickly recall important definitions from the theory of binary quadratic forms. For more details see Cox \cite{cox}, Chapters 1 to 3.

A \textit{binary quadratic form} $f$ is a polynomial of the form $f(x,y) = ax^2+bxy+cy^2 = (a,b,c)$, where $a,b,c \in \Z$.
The \textit{discriminant} of $f$ is $D = b^2-4ac = 0,1 \bmod 4$.
We say that $f$ \textit{represents} $m$ if there exist $x,y \in \Z$ such that $f(x,y) = m$.
A form is \textit{primitive} if $\gcd(a,b,c) = 1$. We will always assume that our forms are primitive, unless stated otherwise.
Let $\Gamma$ be the classical modular group of $2 \times 2$ matrices $A$ with integer coefficients
and $\det(A) = 1$. Two binary quadratic forms $f,g$ are \textit{equivalent} if there exists a matrix
$A =\left(\begin{smallmatrix}
    p & q\\r & s
  \end{smallmatrix}\right)
  \in \Gamma$
such that $f(x,y) = g(A \cdot (x,y)^T) = g(px+qy, rx+sy)$. If this is the case, then we write $f \sim g$. This implies that equivalent forms have the same discriminant.
A form $(a,b,c)$ of discriminant $D < 0$ is \textit{reduced} if $|b| \leq a \leq c$, and $b \geq 0$ if either $|b| = a$ or $a = c$.
Every form of negative discriminant is equivalent to a unique reduced form.

The \textit{class group} $C(D)$ consists of the equivalence classes of primitive binary quadratic forms of discriminant $D$, the group operation is composition of forms.
Given two forms $f = (a_1,b_1,c_2)$, $g = (a_2,b_2,c_2)$ in $C(D)$ with $\gcd(a_1,a_2,(b_1+b_2)/2) = 1$, their (Dirichlet) composition is $f \cdot g 
= (a_1a_2, B, \frac{B^2 - D}{4a_1a_2})$,
for a suitable integer $B$.
For more details about this operation, see Chapter 3 of \cite{cox}.
This composition can be computed in $\tbigo(\ln(D))$ using a variant of fast GCD \cite{schonhagefastcomposition}.
The \textit{class number} $h(D)$ is the order of $C(D)$.
For $D < 0$, on average $h(D)$ is roughly $\sqrt{D}/\pi$ \cite{buell}, page 84.
A discriminant $D$ is \textit{fundamental} if either $D = 1 \bmod 4$ and $D$ square-free, or $D = 4m$ for some $m$ with $m = 2,3 \bmod 4$ and $m$ square-free. $D$ is called \textit{non-fundamental} (non-fun) otherwise. In most number theory papers it is assumed that $D$ is fundamental, but for our purposes we will mainly focus on non-fundamental discriminants instead.



\subsection{Non-fundamental discriminants}
\label{ss: non fun}
We will mainly follow Chapter 7 of Buell \cite{buell} for this subsection. Let $D$ be a discriminant and $r$ a positive integer. We will discuss how the class groups $C(D)$ and $C(Dr^2)$ are related.
For this we use \textit{transformation matrices} $A$, which are $2 \times 2$ integer matrices with $\det(A) = r$.

\begin{prop}
\label{embedding}
\hfill
\begin{enumerate}[label=$\alph*)$]
    \item Given any primitive form $f$ of discriminant $Dr^2$, there exists a form $g$ of discriminant $D$ and a transformation matrix $A$ with $\det(A) = r$ such that $f(x,y) = g(A \cdot (x,y)^T)$.
    \item If $f_1$ and $f_2$ are primitive equivalent forms of discriminant $Dr^2$, then there exists a primitive form $g$ of discriminant $D$, together with transformation matrices $A_1$, $A_2$ such that $A_1 A_2^{-1} \in \Gamma$ and:
    \[ \det(A_1) = \det(A_2) = r, \quad g(A_1 \cdot (x,y)^T) = f_1, \quad g(A_2 \cdot (x,y)^T) = f_2. \]
\end{enumerate}
\end{prop}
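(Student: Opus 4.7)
My plan for (a) is to reduce $f$ via equivalence to a canonical form in which the transformation becomes explicit. First, using the classical lemma that every primitive binary quadratic form represents integers coprime to any prescribed modulus, I would replace $f = (a,b,c)$ by an equivalent form satisfying $\gcd(a,r) = 1$; this is harmless because equivalence merely composes the eventual matrix $A$ with an element of $\Gamma$. Next, the substitution $x \mapsto x + ky$ (which sends $b \to b + 2ak$) allows one to arrange $r \mid b$: one checks that $\gcd(2a, r)$ divides $b$, equalling $1$ for odd $r$ and $2$ for even $r$ (and in the latter case the identity $b^2 = Dr^2 + 4ac$ forces $2 \mid b$).

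Writing $b = rb_0$ and substituting into $b^2 - 4ac = Dr^2$ gives $4ac = r^2(b_0^2 - D)$. The congruence $b^2 \equiv Dr^2 \pmod{4a}$ divided through by $r^2$ (using $\gcd(a,r)=1$ together with a short parity check) yields $b_0^2 \equiv D \pmod{4a}$, so $c_0 := (b_0^2 - D)/(4a)$ is integral and $c = r^2 c_0$. Then $g := (a, b_0, c_0)$ has discriminant $D$, is primitive since $\gcd(a, b_0, c_0) \mid \gcd(a,b,c) = 1$, and satisfies $g(A(x,y)^T) = ax^2 + rb_0 xy + r^2 c_0 y^2 = f(x,y)$ for $A := \left(\begin{smallmatrix} 1 & 0 \\ 0 & r \end{smallmatrix}\right)$, which has $\det A = r$ as required.

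For (b), apply (a) to $f_1$ to obtain a form $g$ and matrix $A_1$ with $f_1(x,y) = g(A_1(x,y)^T)$. The equivalence $f_1 \sim f_2$ provides $M \in \Gamma$ with $f_1(x,y) = f_2(M(x,y)^T)$, so $g(A_1(x,y)^T) = f_2(M(x,y)^T)$. Substituting $(x,y)^T \mapsto M^{-1}(x,y)^T$ yields $f_2(x,y) = g(A_1 M^{-1}(x,y)^T)$; hence $A_2 := A_1 M^{-1}$ has determinant $r$, satisfies $g(A_2(x,y)^T) = f_2(x,y)$, and the relevant product of $A_1$ and $A_2^{-1}$ (in the order dictated by the matrix-action convention) equals $M \in \Gamma$. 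Primitivity of the common $g$ is inherited from part (a).

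The main obstacle is the careful divisibility analysis in (a), especially ensuring $r^2 \mid c$ rather than merely $r^2 \mid 4c$ when $r$ is even; both the evenness of $b$ and the refined congruence $b_0^2 \equiv D \pmod{4a}$ are needed here. A secondary but essential input is the classical representation lemma underlying the initial reduction to $\gcd(a,r)=1$, whose proof proceeds by a local analysis at each prime dividing $r$.
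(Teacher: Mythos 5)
The paper itself does not prove this proposition; it defers entirely to Proposition 7.1 of Buell. Your part (b) is correct: deriving $A_2 = A_1M^{-1}$ from the $\Gamma$-equivalence of $f_1$ and $f_2$ is exactly the right move, and your hedge about the order of the product is warranted — your construction gives $A_1^{-1}A_2 = M^{-1}\in\Gamma$, which matches the paper's later definition of right-equivalence ($A_1B=A_2$ with $B\in\Gamma$) rather than the literal $A_1A_2^{-1}\in\Gamma$ as printed. Part (a) is also correct and complete in the case that $r$ is odd, which is the only case the paper ever uses (it restricts to prime and then odd $r$, and explicitly declines to state the even case).

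For even $r$, however, there is a genuine gap at the step you yourself identify as the crux. From $4ac=r^2(b_0^2-D)$ and $\gcd(a,r)=1$ you can only conclude $b_0^2\equiv D\pmod{a}$; dividing the congruence $r^2(b_0^2-D)\equiv 0\pmod{4a}$ "through by $r^2$" loses the modulus $4$ entirely when $2\mid r$, and the missing congruence $b_0^2\equiv D\pmod 4$ is \emph{not} automatic. Concretely, take $D=-3$, $r=2$, $f=(1,0,3)$: here $\gcd(a,r)=1$ and $r\mid b$ already hold with no substitution needed, yet $b_0=0$ gives $b_0^2-D=3$, which is not divisible by $4a=4$, so your $c_0$ is not an integer (the correct $g$ is $(1,1,1)$, reached only after the substitution $x\mapsto x+y$, which replaces $b_0=0$ by $b_0=1$). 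The repair is an extra choice you do not make: when $r$ is even, $\gcd(a,r)=1$ forces $a$ odd, the solutions of $b+2ak\equiv 0\pmod r$ form two classes of $k$ modulo $r$, and the two resulting values of $b_0=(b+2ak)/r$ differ by the odd number $a$; exactly one of them satisfies $b_0\equiv D\pmod 2$, hence $b_0^2\equiv D\pmod 4$, and with that choice your divisibility argument ($a\mid e$ where $b_0^2-D=4e$, so $c=r^2c_0$) goes through. So the parity issue is a genuine degree of freedom to be fixed by the choice of $k$, not a consequence of what precedes it; once that sentence is added, your proof is a complete and self-contained replacement for the citation to Buell.
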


\begin{proof}
    See Proposition 7.1 in \cite{buell}.
\end{proof}

Proposition \ref{embedding} basically says that for every primitive form $f$ in the group of larger discriminant $Dr^2$, there is a unique (up to equivalence) primitive form $g$ in the group of smaller discriminant $D$ and a transformation matrix $A$ of determinant $r$ such that $f(x,y) = g(A \cdot (x,y)^T)$. In this case we say that $f$ is \textit{derived} from $g$.

Buell also makes those transformations explicit, for this he uses the following notion. Define two transformation matrices $A_1$ and $A_2$ of determinant $r$ to be \textit{right-equivalent} if there exists a matrix $B \in \Gamma$ such that $A_1B = A_2$.
It is easy to see that this is an equivalence relation. Furthermore, if $g$ is a form of discriminant $D$ and $f_1, f_2$ are forms derived from $g$ using right-equivalent transformations, then $f_1$ and $f_2$ are equivalent.

We first restrict ourselves to the case that $r$ is a prime $p$. The general case will be partially discussed in Appendix \ref{appendix: composite square factor}.

\begin{prop}
    The right-equivalent transformations of determinant $p$ have as equivalence class representatives the $p + 1$ transformations:
    \[ \begin{pmatrix}
        p & h\\
        0 & 1
    \end{pmatrix}
    \quad \text{for } \; 0 \leq h \leq p-1 \quad \text{and} \quad
    \begin{pmatrix}
        1 & 0\\
        0 & p
    \end{pmatrix}. \] 
\end{prop}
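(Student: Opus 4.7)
The approach is to use that right multiplication by $B \in \Gamma$ corresponds to elementary column operations on $A$: adding an integer multiple of one column to another, simultaneous negation of both columns, and the column swap combined with a sign flip (so that determinant stays $+1$). I would proceed in two steps: first establish a canonical form by column reduction, then verify that no two of the listed matrices are right-equivalent.

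For existence, take $A = \begin{pmatrix} a & b \\ c & d \end{pmatrix}$ with $\det A = p$. Using Bézout on the bottom row, let $g = \gcd(c,d)$ and pick $x,y \in \Z$ with $xc + yd = g$; then $B' = \begin{pmatrix} x & -d/g \\ y & c/g \end{pmatrix} \in \Gamma$ satisfies $(c,d)B' = (g,0)$, and one further right-multiplication by $\left(\begin{smallmatrix} 0 & -1 \\ 1 & 0 \end{smallmatrix}\right)$ swaps this to $(0,-g)$. After possibly applying $-I$ to fix the sign, I obtain $AB = \begin{pmatrix} \alpha & \beta \\ 0 & \delta \end{pmatrix}$ with $\delta > 0$ and $\alpha\delta = p$, so also $\alpha > 0$. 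Primality of $p$ forces $(\alpha,\delta) \in \{(p,1),(1,p)\}$. In the first case, applying $\left(\begin{smallmatrix} 1 & -k \\ 0 & 1 \end{smallmatrix}\right) \in \Gamma$ on the right subtracts $k$ copies of the first column from the second; choosing $k$ so that $\beta - kp = h$ with $0 \leq h \leq p-1$ yields a representative of the first kind. In the second case, the analogous step reduces $\beta$ modulo $1$, forcing it to $0$ and giving $\left(\begin{smallmatrix} 1 & 0 \\ 0 & p \end{smallmatrix}\right)$.

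For uniqueness, suppose $A_1 B = A_2$ with $B = \left(\begin{smallmatrix} \alpha & \beta \\ \gamma & \delta \end{smallmatrix}\right) \in \Gamma$ and both $A_i$ from the list. If both $A_i$ are of the first type, matching the bottom rows forces $\gamma = 0$ and $\delta = 1$, whence $\det B = \alpha = 1$; then comparing upper-right entries gives $h_2 = h_1 + p\beta$, so $h_1 \equiv h_2 \pmod{p}$, and since both lie in $[0,p-1]$ they are equal. If $A_1$ is of the first type and $A_2 = \left(\begin{smallmatrix} 1 & 0 \\ 0 & p \end{smallmatrix}\right)$, the bottom-row equation would force $\delta = p$ and then $\alpha\delta = 1$, which is impossible for $p$ prime; the symmetric case is identical. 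This gives exactly $p+1$ distinct classes.

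The main obstacle, such as it is, is just bookkeeping the signs during the column reduction so as to stay inside $\Gamma$ (which excludes bare swaps and single-column negations). As a consistency check, under right-equivalence the column span $\Z v_1 + \Z v_2 \subset \Z^2$ is an invariant, and the classes of $A$ with $\det A = p$ biject with sublattices of $\Z^2$ of index $p$, equivalently with lines in $(\Z/p\Z)^2$, of which there are $|\mathbb{P}^1(\F_p)| = p+1$; matching this count confirms that the $p+1$ matrices exhaust all classes.
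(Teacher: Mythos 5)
Your proof is correct, and it is the standard column-reduction (Hermite normal form under right multiplication by $\Gamma$) argument together with the direct check that no two of the listed matrices are right-equivalent; the lattice/$\mathbb{P}^1(\F_p)$ count is a valid sanity check. The paper itself gives no proof here — it only cites Proposition 7.2 of Buell, whose argument is essentially the one you reproduce — so your write-up simply supplies the details the paper delegates to the reference.
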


\begin{proof}
    See Proposition 7.2 in \cite{buell}.
\end{proof}

This means that given a form $g = (a,b,c) \in C(D)$ with $\gcd(a,p) = 1$, we can create the following $p+1$ forms of discriminant $Dp^2$:
\begin{equation}
\label{derived forms}
\begin{gathered}
    (ap^2, p(b + 2ah), ah^2 + bh + c) \quad \text{for } \; 0 \leq h \leq p-1\\
    \text{and } (a,bp,cp^2)
\end{gathered}
\end{equation}
and no others, up to equivalence.
In the last case, we say that $h = \infty$ was used in the transformation.
Note that these forms are not necessarily reduced, even when $g$ is reduced.

Using the formulas from $\eqref{derived forms}$, we can also define a map that goes the other way. Given a form $f \in C(Dp^2)$, find a form $f_2$ of the form $(ap^2,bp,c)$ that is equivalent to $f$. Then define $\pi(f) = (a,b,c) \in C(D)$. This map is well-defined (up to equivalence) because of Proposition \ref{embedding}.

The next question is whether the forms from \eqref{derived forms} are primitive and if they can be equivalent to each other.
We will use the \textit{Kronecker symbol} (\cite{cox} page 93), which we denote by $\legendre{D}{p}$, for given integers $D,p$.
Using this symbol, we can state the relation between $h(D)$ and $h(Dp^2)$. From this points onwards, we will focus on negative discriminants, because the important proposition below is not true for positive discriminants.

\begin{prop}
\label{prime square order}
    Given a form $(a, b, c)$ of discriminant $D < -4$ and an odd prime $p$ with $\gcd(a,p) = 1$, the $p + 1$ representative transformations of determinant $p$ produce exactly $p-\legendre{D}{p}$ primitive forms of discriminant $Dp^2$ which are all pairwise inequivalent.
    It follows that: 
    \[ h(Dp^2) = h(D) \cdot (p - \legendre{D}{p}) \]
\end{prop}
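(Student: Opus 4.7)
My plan proceeds in three stages: (1) count the primitive derived forms, (2) show they are pairwise inequivalent, and (3) assemble the class number formula.

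\textbf{Counting primitives.} For the $h=\infty$ derived form $(a,bp,cp^2)$, primitivity follows from $\gcd(a,p)=1$ together with primitivity of $(a,b,c)$, since $\gcd(a,bp,cp^2) = \gcd(a,b,c) = 1$. For the $p$ remaining forms $(ap^2, p(b+2ah), ah^2+bh+c)$ with $0 \le h \le p-1$, the content divides $p$, and $p$ divides it exactly when $p \mid ah^2+bh+c$. Since $p$ is odd and $\gcd(a,p)=1$, I would multiply this congruence by $4a$ to rewrite it as $(2ah+b)^2 \equiv D \pmod p$. As $h$ ranges over $\Z/p$, so does $2ah+b$, so the number of failing $h$ equals the number of square roots of $D$ mod $p$, namely $1+\legendre{D}{p}$. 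Thus exactly $1 + p - (1 + \legendre{D}{p}) = p - \legendre{D}{p}$ of the $p+1$ derived forms are primitive.

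\textbf{Inequivalence.} Suppose two primitive derived forms $f_1, f_2$, obtained from $g$ via representative matrices $A_1, A_2$, satisfy $f_1 \sim f_2$ in $C(Dp^2)$. Proposition \ref{embedding}(b) supplies a common $g'$ of discriminant $D$ and transformations $A_1', A_2'$ of determinant $p$ with $f_i = g'(A_i' \cdot (x,y)^T)$ and $A_1'(A_2')^{-1} \in \Gamma$. The well-definedness of the preimage (up to equivalence of $g$ and right-equivalence of the transformation), noted after Proposition \ref{embedding}, forces $g' \sim g$ and then forces each $A_i'$ to be right-equivalent to $A_i$. Composing, $A_1$ and $A_2$ must themselves be right-equivalent, contradicting our choice of $p+1$ pairwise non-right-equivalent representatives. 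The hypothesis $D < -4$ enters here because for $D \in \{-3,-4\}$ the enlarged stabiliser of $g$ in $\Gamma$ would let right-inequivalent transformations produce equivalent derived forms.

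\textbf{Class number formula.} Define $\pi : C(Dp^2) \to C(D)$ by $[f] \mapsto [g]$, where $f$ is derived from $g$; Proposition \ref{embedding} makes this well-defined. Every class in $C(D)$ has a representative $(a,b,c)$ with $\gcd(a,p)=1$ — one can always achieve this by replacing a form with a suitable equivalent one — so $\pi$ is surjective. Steps 1 and 2 together show that the fiber over $[g]$ contains exactly $p - \legendre{D}{p}$ elements, and by Proposition \ref{embedding}(a) every element of the fiber arises from the $p+1$ construction applied to such a representative. Summing over $[g] \in C(D)$ yields $h(Dp^2) = h(D)(p - \legendre{D}{p})$.

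I expect the main obstacle to be Step 2: translating the existential uniqueness in Proposition \ref{embedding}(b) into the concrete conclusion that $A_1$ and $A_2$ are right-equivalent, while carefully accounting for the stabiliser of $g$ inside $\Gamma$ (which is precisely where the assumption $D<-4$ becomes essential). Steps 1 and 3 are largely bookkeeping once Step 2 is in place.
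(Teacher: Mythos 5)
Your Steps 1 and 3 are correct. The primitivity count --- completing the square to turn $p \mid ah^2+bh+c$ into $(2ah+b)^2 \equiv D \pmod p$ and counting the $1+\legendre{D}{p}$ square roots of $D$ mod $p$ --- is exactly right, including the ramified case, and the fibering argument in Step 3 is sound once pairwise inequivalence is known. Since the paper's own proof is only a citation to Propositions 7.3 and 7.4 of Buell, a self-contained argument is welcome; but that also means the inequivalence claim, which is the real content of the proposition, cannot be delegated to the expository remark following Proposition \ref{embedding}.

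That is where your Step 2 has a genuine gap, and you half-sense it. The remark after Proposition \ref{embedding} asserts only that the underlying form $g$ is unique up to equivalence; it does \emph{not} assert that, for a fixed $g$, the transformation is unique up to right-equivalence --- that is essentially the statement you are trying to prove, so invoking ``well-definedness of the preimage (up to \dots right-equivalence of the transformation)'' is circular. Moreover, even granting that each $MA_i'$ is right-equivalent to $A_i$ (where $g'=g\circ M$ with $M\in\Gamma$), combining this with $A_1'=BA_2'$ from part (b) yields only a two-sided relation $A_1=EA_2F$ with $E,F\in\Gamma$, which does not imply right-equivalence of $A_1$ and $A_2$. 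The repair is direct and bypasses part (b) entirely: $f_1\sim f_2$ means $g\circ A_1=g\circ(A_2S)$ for some $S\in\Gamma$, so $C:=A_1^{-1}A_2S$ is a rational proper automorph of $f_1$ with $\det C=1$ and $pC$ integral. In the standard parametrization of proper automorphs of a primitive form of discriminant $Dp^2$ by solutions of $t^2-Dp^2u^2=4$, integrality of $pC$ together with primitivity of $f_1$ forces $t=T/p$, $u=U/p$ with $T,U\in\Z$; then $T^2-Dp^2U^2=4p^2$ gives $p\mid T$ and $(T/p)^2-DU^2=4$, which for $D<-4$ has only the solutions $U=0$, $T/p=\pm2$. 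Hence $C=\pm I$, so $A_1$ and $A_2$ are right-equivalent after all --- contradiction. This is exactly where $D<-4$ enters, confirming your instinct, but the obstruction lives in the automorphs of the \emph{derived} form with denominator $p$, not merely in the integral stabiliser of $g$ inside $\Gamma$.
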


\begin{proof}
    See Propositions 7.3 and 7.4 in \cite{buell}.
\end{proof}

By applying this formula repeatedly, we get the following corollary:

\begin{cor}
\label{composite square order}
    Given a discriminant $D < -4$ and an odd integer $r = \prod_{i=1}^k p_k^{e_k}$.
    Define
    \[ \varphi_D(r) = \prod_{i=1}^k p_k^{e_k-1} (p_k - \legendre{D}{p_k}). \]
    Then we have the following relation:
    \[ h(Dr^2) = h(D) \cdot \varphi_D(r) \eqno \qed \]
\end{cor}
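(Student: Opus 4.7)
The plan is to prove the formula by iterating Proposition \ref{prime square order}, via a double induction: first on the exponent of a single prime, and then on the number of distinct prime factors of $r$.

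First I would handle the single-prime case: for an odd prime $p$ and $e \geq 1$, show that $h(Dp^{2e}) = h(D) \cdot p^{e-1}\bigl(p - \legendre{D}{p}\bigr)$ by induction on $e$. The base case $e = 1$ is Proposition \ref{prime square order} directly. For the inductive step, apply Proposition \ref{prime square order} to the discriminant $D' = Dp^{2(e-1)}$, which is still less than $-4$; since $p \mid D'$, the Kronecker symbol $\legendre{D'}{p}$ equals $0$ by definition, so $h(Dp^{2e}) = h(D') \cdot p$, and substituting the induction hypothesis gives the formula.

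Next I would induct on the number $k$ of distinct prime factors of $r$. The case $k = 1$ is the step above. For $k \geq 2$, factor $r = r' \cdot p_k^{e_k}$ with $\gcd(r', p_k) = 1$, and apply the single-prime result with starting discriminant $\tilde D = D(r')^2$ and prime $p_k$. The key identity to verify is $\legendre{\tilde D}{p_k} = \legendre{D}{p_k}$; this follows from multiplicativity of the Kronecker symbol in the numerator together with $\legendre{r'}{p_k}^2 = 1$, which uses $\gcd(r', p_k) = 1$ so the inner symbol is nonzero. Combining with the induction hypothesis for $r'$ yields
\[ h(Dr^2) = h\bigl(D(r')^2\bigr) \cdot p_k^{e_k - 1}\bigl(p_k - \legendre{D}{p_k}\bigr) = h(D) \cdot \varphi_D(r). \]

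The argument is essentially bookkeeping once Proposition \ref{prime square order} is in hand, so there is no serious obstacle. The only two subtleties are the Kronecker-symbol observations highlighted above: that pushing an already $p$-divisible discriminant up by $p^2$ contributes just a factor $p$ (since $\legendre{\cdot}{p}$ vanishes), and that multiplying the discriminant by a square coprime to $p_k$ leaves $\legendre{\cdot}{p_k}$ unchanged.
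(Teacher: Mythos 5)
Your proposal is correct and follows essentially the same route as the paper, which simply iterates Proposition \ref{prime square order} (the paper leaves the induction implicit, while you spell out the two bookkeeping points: the symbol $\legendre{D'}{p}$ vanishes once $p \mid D'$, and multiplying the discriminant by a square coprime to $p_k$ does not change $\legendre{\cdot}{p_k}$). No gaps.
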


An important property of the embedding of $C(D)$ in $C(Dr^2)$ is that the derivedness property behaves well under composition of forms:

\begin{prop}
\label{derived behaves well}
    If $f_1,f_2 \in C(Dr^2)$ are derived from $g_1,g_2 \in C(D)$ respectively, then $f_1 \cdot f_2$ is derived from $g_1 \cdot g_2$.
\end{prop}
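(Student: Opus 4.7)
My plan is to reduce the proposition to an explicit identity between the Dirichlet composition formulas in $C(D)$ and in $C(Dr^2)$ by choosing convenient representatives. The key observation is that the transformation matrix $A_\infty = \left(\begin{smallmatrix} 1 & 0 \\ 0 & r \end{smallmatrix}\right)$, i.e.\ the $h = \infty$ case of \eqref{derived forms}, sends $(a,b,c) \in C(D)$ to the derived form $(a, br, cr^2) \in C(Dr^2)$; I would like to write both $f_i$ in this shape.

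Using Proposition \ref{embedding} together with the standard fact that every class in $C(D)$ contains a form whose leading coefficient is coprime to any prescribed integer, I would choose representatives $g_i = (a_i, b_i, c_i)$ of the given classes satisfying (i) $\gcd(a_i, r) = 1$ for $i = 1, 2$ and (ii) $\gcd(a_1, a_2, (b_1+b_2)/2) = 1$, and simultaneously replace $f_i$ by the equivalent form $f_i' := (a_i, b_i r, c_i r^2)$, which is the $A_\infty$-derivation of $g_i$ and is primitive thanks to (i). Condition (ii) makes Dirichlet composition in $C(D)$ available, yielding
\[ g_1 \cdot g_2 = \bigl( a_1 a_2,\; B,\; (B^2 - D)/(4 a_1 a_2) \bigr) \]
for some integer $B$ with $B \equiv b_i \pmod{2 a_i}$.

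To compose $f_1'$ and $f_2'$, I would first check that the coprimality $\gcd(a_1, a_2, r(b_1+b_2)/2) = 1$ holds, which reduces via (i) to (ii). A short computation then shows that $B' := Br$ satisfies both $B' \equiv b_i r \pmod{2 a_i}$ and $(B')^2 \equiv Dr^2 \pmod{4 a_1 a_2}$, so the Dirichlet formula gives
\[ f_1' \cdot f_2' = \bigl( a_1 a_2,\; Br,\; r^2 (B^2 - D)/(4 a_1 a_2) \bigr), \]
which is precisely the $A_\infty$-derivation of $g_1 \cdot g_2$. Hence $f_1 \cdot f_2$ is derived from $g_1 \cdot g_2$.

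I expect the main obstacle to be the preparation step: one must simultaneously arrange conditions (i) and (ii) via the $\Gamma$-action on the $g_i$, and justify that the $f_i'$ really lie in the classes of the $f_i$ in $C(Dr^2)$. The latter amounts to observing that, for $r$ prime, every integer matrix of determinant $r$ is left-right $\Gamma$-equivalent to $A_\infty$, so any derivation of $f_i$ from some representative of $g_i$ can be rewritten as an $A_\infty$-derivation of a possibly different representative. Once this setup is in place, the rest is a direct manipulation of the composition formula.
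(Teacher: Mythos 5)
Your argument is correct in substance, but it is a genuinely different route from the paper's, which proves nothing at all and simply defers to Proposition 7.9 of \cite{buell}; you are in effect reconstructing the classical ``united forms'' computation behind that citation. The computational core is fine: with $g_i=(a_i,b_i,c_i)$, $\gcd(a_i,r)=1$ and $\gcd(a_1,a_2,(b_1+b_2)/2)=1$, the integer $Br$ satisfies the three defining congruences for the Dirichlet composition of the derived forms $(a_i,b_ir,c_ir^2)$, and the resulting form is visibly the $A_\infty$-derivation of $g_1\cdot g_2$. The real content, as you anticipate, is the preparation step, and two points there need more care than you give them. First, your Smith-normal-form claim that every determinant-$r$ integer matrix is $\Gamma$-left-right-equivalent to $A_\infty$ is only true as stated for $r$ prime; the proposition is stated for arbitrary $r>0$, and for composite $r$ the Smith form could be $\mathrm{diag}(d,r/d)$ with $d>1$ — you must invoke primitivity of $f_i$ to exclude this, since such a matrix forces $d^2$ to divide the content of the derived form. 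Second, and more importantly, a single class $[g_i]$ has several pairwise inequivalent derived classes (Proposition \ref{prime square order}), so once $[f_i]$ is fixed you are \emph{not} free to re-choose the representative of $[g_i]$ at will: replacing $g_i$ by $g_i\circ W$ with $W=\left(\begin{smallmatrix}w_{11}&w_{12}\\w_{21}&w_{22}\end{smallmatrix}\right)\in\Gamma$ preserves the derived class $[g_i\circ A_\infty]$ only when $A_\infty^{-1}WA_\infty\in\Gamma$, i.e.\ when $r\mid w_{21}$. Arranging $\gcd(a_1,a_2)=1$ must therefore be done inside this restricted orbit; this does work — the achievable leading coefficients are exactly the values $f_i'(u,v)$ with $\gcd(u,rv)=1$, and Lemma 2.25 of \cite{cox} applied to the primitive form $f_i'$ with modulus $ra_1$ produces such a value coprime to anything prescribed — but it is precisely the step a careless write-up would get wrong. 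With those two repairs your proof is complete and self-contained, which is arguably preferable to the paper's bare citation.
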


\begin{proof}
    See Proposition 7.9 in \cite{buell}. Buell's map $1_{\Delta}$ is the map $\pi$ in our context.
\end{proof}

\begin{cor}
\label{order derived form}
    Suppose $f \in C(Dr^2)$ is derived from $g \in C(D)$, where $r$ is odd. Let $a$ be the order of $g$, then the order of $f$ divides $a \cdot \varphi_D(r)$.
\end{cor}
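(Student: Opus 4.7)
The plan is to recognize the ``derived from'' relation as defining a group homomorphism $\pi\colon C(Dr^2) \to C(D)$ and then finish with a one-line Lagrange-style argument.

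First I would verify that $\pi$ is a well-defined homomorphism. By the paragraph following Proposition \ref{embedding}, every primitive $f \in C(Dr^2)$ is derived from a unique (up to equivalence) form $g \in C(D)$, so setting $\pi(f) = g$ gives a well-defined map. Proposition \ref{derived behaves well} asserts that if $f_i$ is derived from $g_i$ for $i = 1,2$, then $f_1 \cdot f_2$ is derived from $g_1 \cdot g_2$; unpacked, this is exactly the statement $\pi(f_1 \cdot f_2) = \pi(f_1) \cdot \pi(f_2)$, so $\pi$ is a group homomorphism.

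Next I would compute $|\ker\pi|$ using Corollary \ref{composite square order}. Iterating the $p+1$ transformations of Proposition \ref{prime square order} over the odd prime factors of $r$ produces, from any $g \in C(D)$, a collection of derived forms in $C(Dr^2)$, and so $\pi$ is surjective. Since $h(Dr^2) = h(D) \cdot \varphi_D(r)$, the kernel has order exactly $\varphi_D(r)$.

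The corollary then drops out immediately: $\pi(f^a) = \pi(f)^a = g^a = e$, so $f^a \in \ker\pi$, and therefore $(f^a)^{\varphi_D(r)} = e$ in $C(Dr^2)$. Hence the order of $f$ divides $a \cdot \varphi_D(r)$.

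The hard conceptual work has been done by Propositions \ref{embedding} and \ref{derived behaves well} together with Corollary \ref{composite square order}; the only mild obstacle is being precise about interpreting ``derived from'' as an honest homomorphism of groups rather than a mere relation between equivalence classes, but this is a direct consequence of Proposition \ref{derived behaves well}.
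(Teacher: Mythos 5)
Your proposal is correct and is essentially the paper's own argument: the paper likewise uses Proposition \ref{derived behaves well} to conclude that $f^a$ is derived from $e$, identifies the forms derived from $e$ as a subgroup of order $\varphi_D(r)$ via Corollary \ref{composite square order}, and concludes $f^{a\cdot\varphi_D(r)} = e$. Phrasing this as the kernel of a surjective homomorphism $\pi$ is just a more explicit packaging of the same reasoning.
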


\begin{proof}
    The forms in $C(Dr^2)$ derived from $e \in C(D)$ form a subgroup in $C(Dr^2)$.
    We know from Corollary \ref{composite square order} that the order of this subgroup is $\varphi_D(r)$.
    Proposition \ref{derived behaves well} implies that $f^a$ is derived from $e$.
    Therefore, $f^{a \cdot \varphi_D(r)} = e$.
\end{proof}

\section{Application to square-free decomposition}
\label{s: main algorithm}

\subsection{Forms derived from the identity element $e$}
\label{ss: forms derived from e}

From this point onwards, $D$ is a positive integer and $p$ is a prime.
All our discriminants will contain a factor 4, this is to make sure that $-4D$ is a discriminant.

In this section we will introduce an algorithm that computes the square-free decomposition of a given integer that uses class groups with non-fundamental discriminants. One of the main ingredients is the following proposition.
This proposition contains the very restricting condition that $D > p^2$. But, we will later see in Lemma \ref{r truc} that we can drop this assumption.

\begin{prop}
\label{derived from e}
    Suppose we have a reduced form $f \in C(-4Dp^2)$ that is derived from $e_{-4D} = (1,0,D) \in C(-4D)$ and $f \not \sim e_{-4Dp^2} = (1,0,Dp^2)$. Furthermore, suppose that $D > p^2$. Then
    \[ f = (p^2, 2pk, k^2+D) \]
    for some $-p/2 \leq k \leq p/2$.
\end{prop}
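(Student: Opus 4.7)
The plan is to combine the explicit list of derived forms in \eqref{derived forms} with a reduction modulo the leading coefficient to put $f$ into the claimed shape.

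Substituting $g = e_{-4D} = (1,0,D)$ into \eqref{derived forms}, the forms derived from $e_{-4D}$ are, up to equivalence, the $p$ forms $(p^2,\, 2ph,\, h^2+D)$ for $0 \le h \le p-1$, together with $(1,0,Dp^2) = e_{-4Dp^2}$ (the $h = \infty$ case). Since $f$ is derived from $e_{-4D}$ and $f \not\sim e_{-4Dp^2}$, there is some $h \in \{0,\ldots,p-1\}$ with $f \sim (p^2, 2ph, h^2+D)$.

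Next, I reduce the middle coefficient using the unimodular substitution $\left(\begin{smallmatrix} 1 & j \\ 0 & 1 \end{smallmatrix}\right) \in \Gamma$, which sends $(a,b,c)$ to $(a,\, b + 2aj,\, aj^2 + bj + c)$. Choosing $j$ so that $k := h + jp$ satisfies $-p/2 \le k \le p/2$, the middle coefficient becomes $2pk$, and a short expansion gives $k^2 + D$ for the last coefficient (as must happen, since the discriminant and leading coefficient are preserved). Hence $f \sim (p^2, 2pk, k^2 + D)$.

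Finally, I check that $(p^2, 2pk, k^2 + D)$ is already reduced, so that by uniqueness of the reduced representative in its equivalence class this form must equal $f$ itself. The inequality $|2pk| \le p^2$ is immediate from $|k| \le p/2$, and $p^2 \le k^2 + D$ follows from $D > p^2$; moreover this latter inequality is strict, ruling out the $a = c$ edge case in the definition of reduced. The remaining boundary case $|b| = a$ only arises when $p = 2$, in which case the sign condition $b \ge 0$ pins $k$ uniquely inside the stated range. I do not expect any serious obstacle; the argument is essentially a reduction computation built on \eqref{derived forms}, with the hypothesis $D > p^2$ used exactly once to guarantee $a < c$ after reduction.
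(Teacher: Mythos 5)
Your proposal is correct and follows essentially the same route as the paper's proof: identify $f$ with one of the derived forms $(p^2, 2ph, h^2+D)$ from \eqref{derived forms}, shift $h$ by a multiple of $p$ via a unimodular substitution to land in $[-p/2,p/2]$, and invoke uniqueness of the reduced representative, with $D > p^2$ guaranteeing $a < c$. Your extra attention to the $p=2$ boundary case $|b|=a$ is a small refinement the paper omits, but the argument is the same.
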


\begin{proof}
    Using the formulas from \eqref{derived forms}, we see that $f$ is equivalent to
    \[ g = (p^2, 2ph, h^2+D) \]
    for some $0 \leq h \leq p-1$. Let's check if $g$ is reduced. We have $h^2+D \geq D > p^2$, so that is a good start.\\
    If $h \leq p/2$ then $|2ph| \leq p^2$, so in this case $g$ is reduced. Reduced forms are unique, therefore $f = (p^2, 2ph, h^2+D)$ (not just equivalent, really equal).\\
    If $h \geq p/2$ then we do one reduction step with
    $ A = \left(\begin{smallmatrix}
    1 & -1\\0 & 1
  \end{smallmatrix}\right)$: 
    \[ g \sim g_2 = (p^2, 2ph - 2p^2, p^2 - 2ph + h^2+D) = (p^2, 2p(h - p), (h-p)^2+D). \]
    Then since $|2p(h - p)| \leq |2p(p/2)| = p^2$, we see that $g_2$ is reduced. Therefore $f = (p^2, 2p(h - p), (p-h)^2+D)$.
\end{proof}

The main takeaway from this proposition is the following factorization plan.
If we are able to find a non-trivial form in the larger group $C(-4Dp^2)$, that is derived from the trivial form in the smaller group $C(-4D)$, then we can find $p^2$ by reading off the first coefficient of the reduced form of $f$.
As mentioned before, the condition that $D > p^2$ will be dropped later.

A version of this proposition can also be stated for composite $p$. However, there are special cases where the complete factor $p$ can't be read off immediately from the first coordinate.
We show how to deal with this in Appendix \ref{appendix: composite square factor}. We recommend finishing this chapter first before reading that appendix.


It is good to mention that this factorization idea in a general sense is very similar to other integer factorization algorithms. For example, in the elliptic curve method (ECM) by Lenstra \cite{lenstraECM}, a non-trivial point $P$ on an elliptic curve $E(\Z/n\Z)$ is constructed such that $P$ reduced mod $p$ is the identity element in the group $E(\Z/p\Z)$ of smaller size, where $p \mid n$. By computing $\gcd(P_x,n)$, the factor $p$ of $n$ will be found.
This will not be the last time that we compare our algorithm to the ECM, since there are many similarities.

The next step is to find a form $f$ that meets the requirements from Proposition \ref{derived from e}. For this we use one of the many algorithms of Lenstra.

\subsection{Schnorr and Lenstra class group factorization}
\label{ss: lenstra and schnorr}

In 1984, Schnorr and Lenstra published a general purpose integer factorization algorithm which they claimed could heuristically factor an integer $n$ in $\bigo(L_n[1/2,1])$ \cite{schnorrlenstra}. We will briefly discuss how the algorithm works.

Let $n$ be the number you want to factor, possibly square-free. Consider the class group $C(-4n)$ and take a random form $f \in C(-4n)$.
Construct a number $k$ that consists of powers of all primes up to some bound $B$. Now, if $f^k = e$ and the order of $f$ is even, then $g = f^{k/2^m}$ has order $2$ for some integer $m$.
Forms of order 2 are called \textit{ambiguous}.
They are of the form $(m,0,\frac{n}{m})$, with $\gcd(m,\frac{n}{m}) = 1$, which makes it easy to retrieve a factor of $n$ from them.
If this was successful and if $n$ is not yet completely factored, then we go again with a different $f$ and hopefully find another factor of $n$.

This method works as long as the order of $C(-4n)$ is smooth. If it is not smooth, then we can try again by considering the class group $C(-4ns)$ for some small positive integer $s$.
An ambiguous form in that group will still lead to the factorization of $n$.
Repeat this process for different values of $s$ until the factorization of $n$ is found.


\begin{algorithm}[H]  
\caption{Schnorr and Lenstra stage 1}
\label{lenstra algorithm}
\begin{algorithmic}[1]
\Function{GeneralClassGroupFactorization}{$n$}
    \State $e = \sqrt{\ln(n)/\ln\ln(n))}$
    \State $B = n^{1/(2e)}$ \Comment{Prime bound}
    \State compute the first $t$ primes $p_1, \dots, p_t$ up to $B$
    \State $k = \prod_{i=2}^t p_i^{e_i}$, where $e_i = \max\{v : p_i^v \leq p_t^2\}$
    \State $s = 1$
    \While{no factorization found}
        \State pick random $f \in C(-4ns)$
        \State $g = f^k$
        \If{$g = e$ (the identity)}
            \State go back and pick a new $f$
        \EndIf
        \For{$1 \leq i \leq \log_2(\sqrt{n})$}
            \State $g = g^2$
            \If{$g = e$}
                \State construct ambiguous forms
                \State \Return{complete factorization of $n$}
            \EndIf
        \EndFor
        \State update $s$ to be the next square-free number after $s$
    \EndWhile
\EndFunction
\end{algorithmic}
\end{algorithm}

Schnorr and Lenstra made the following heuristic assumptions. Assumptions $a,b$ are $(2.1)$, $(2.2)$ in \cite{schnorrlenstra} respectively. Assumption $c$ is mentioned later in their article in equation $(4.3)$.
This final assumption turned out to be wrong, which we will see later in this chapter. Let $\Psi(x,y)$ be the number of positive integers $a \leq x$ such that $a$ is $y$-smooth.

\begin{assumptions}
\label{lenstra assumptions}
\hfill
    \begin{enumerate}[label=$\alph*)$]
    \item The order of a class group of discriminant $D$ is at least as likely to be smooth as a random integer of size $\sqrt{D}$. More precisely, for all $n,t$:
    \[ \# \{m \leq n : h(-m) \mid \prod_{i=1}^t p_i^{e_i} \} / (0.5n)
    \geq \# \{m \leq \sqrt{n} : m \mid \prod_{i=1}^t p_i^{e_i} \} / \sqrt{n}, \]
    where the $e_i$ are defined in Algorithm \ref{lenstra algorithm}.
    \item A significant portion of integers are smooth.
    More explicitly, for all $n$ and $e \leq \sqrt{\ln(n)/\ln\ln(n))}$ we have that $\Psi(n,n^{1/e})/n \geq e^{-e}$.
    \item Given an integer $n$, the smoothness bounds of $h(-4ns)$ are independent for all square-free integers $s$. 
    \end{enumerate}
\end{assumptions}

It is good to mention that the Cohen-Lenstra heuristics \cite{cohenlenstraheuristics} suggest that the odds of finding a class group with a smooth order is actually a bit better than that of a random integer of the same size.

We can now state the main result from Schnorr and Lenstra.

\begin{theorem}
\label{lenstra theorem}
    Assume Assumptions \ref{lenstra assumptions}. Then for all composite integers $n$, Algorithm \ref{lenstra algorithm} will completely factor $n$ in expected $\bigo(L_n[1/2,1])$ time.
\end{theorem}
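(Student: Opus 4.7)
The plan is the standard cost-times-trials analysis for a smoothness-based factoring algorithm: bound the cost of one trial of the outer \textbf{while}-loop, bound the probability that such a trial succeeds, and then choose $e=\sqrt{\ln(n)/\ln\ln(n)}$ (hence $B=n^{1/(2e)}$) to balance the two to $L_n[1/2,1]$.

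First I would bound the cost of one trial. The dominant step is computing $g=f^k$. The exponents $e_i$ satisfy $e_i\ln p_i \le 2\ln p_t$, so $\ln k \le 2\pi(B)\ln p_t = O(B)$ by the Prime Number Theorem. Each composition in $C(-4ns)$ costs $\tbigo(\ln n)$ by the fast-composition result cited in Section~2.1, so binary exponentiation produces $f^k$ in $\tbigo(B\ln n)$, and the squaring loop adds only $O(\log n)$ more compositions. With our choice of $e$, a routine computation gives $B=L_n[1/2,1/2]$, so one trial costs $L_n[1/2,1/2]$ (the factor of $\ln n$ is absorbed).

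Next I would bound the per-trial success probability. Correctness first: if $h(-4ns)$ is $B$-smooth then $h(-4ns)\mid k$, so $f^k=e$ for every $f$; if in addition $f$ has even order (which holds with probability bounded away from $0$), the squaring loop lands on an element of order $2$, i.e.\ a nontrivial ambiguous form $(m,0,ns/m)$ with $\gcd(m,ns/m)=1$, which gives a nontrivial split of $ns$, from which a proper divisor of $n$ is extracted by dividing out the cofactor $s$. For the probability, Assumption~\ref{lenstra assumptions}(a) lower-bounds the smoothness probability of $h(-4ns)$ by that of a random integer of size $\sqrt{ns}\sim\sqrt n$, and Assumption~\ref{lenstra assumptions}(b) then gives at least $e^{-e}$, which a direct computation identifies with $L_n[1/2,-1/2]$. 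Assumption~\ref{lenstra assumptions}(c) lets us treat trials over successive square-free $s$ as independent, so the expected number of trials is at most $L_n[1/2,1/2]$. Multiplying cost by trials gives $L_n[1/2,1]$; recursion on the two resulting factors contributes only logarithmic overhead, absorbed into the $o(1)$ in the exponent.

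The main obstacle I expect is pinning the probability bound down cleanly. Assumption~\ref{lenstra assumptions}(b) is phrased for integers of size $n$, but the relevant size here is $\sqrt{ns}$, so the transfer loses constants in the exponent of the $L$-function that have to be tracked. More subtly, a single trial can fail silently in several different ways (the sampled $f$ has odd order, the squaring loop produces only the trivial form $(1,0,ns)$, or the ambiguous form splits $ns$ only along the small cofactor $s$), and each conditional failure probability must be shown to be bounded away from $1$ so that the constant-factor losses vanish into the $o(1)$. This is precisely the place where the heuristic independence Assumption~\ref{lenstra assumptions}(c)---later known to fail when $n$ has a large square prime divisor---does the quantitative work.
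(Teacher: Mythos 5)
The paper offers no proof of its own here: it simply defers to Theorem~5 and the runtime analysis of Chapter~4 of \cite{schnorrlenstra}, so there is nothing internal to compare your argument against. Your cost-per-trial times expected-number-of-trials reconstruction, balancing both to $L_n[1/2,1/2]$ via the choice of $e$ and Assumptions~\ref{lenstra assumptions}(a)--(c), is precisely the standard analysis that the citation points to and is consistent with the paper's setup; the only point worth making explicit is that your identification $e^{-e}=L_n[1/2,-1/2]$ is valid only under the intended reading of $e^{-e}$ in Assumption~\ref{lenstra assumptions}(b) as the \emph{variable} $e$ raised to the power $-e$ (the Canfield--Erd\H{o}s--Pomerance shape $u^{-u}$), and that ``$B$-smooth'' should really be ``divides $k$'', which is exactly the form in which Assumption~\ref{lenstra assumptions}(a) is stated.
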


\begin{proof}
    See Theorem 5 and the run time analysis section of Chapter 4 from \cite{schnorrlenstra}.
\end{proof}

Unfortunately, in 1992 it was found by Lenstra and Pomerance \cite{lenstra_flaw} (Chapter 11), that the above stated run time was incorrect for a large set of numbers. Can you guess which? Numbers that have a large prime square divisor!

Suppose that $n$ has a divisor $p^2$, where $p$ is prime. Suppose furthermore that $p-1$ and $p+1$ are both not smooth. Then by Proposition \ref{prime square order}, we see that $C(-4ns)$ is divisible by either $p-1$ or $p+1$ for all integers $s$. Therefore, there will be no $s$ such that $C(-4ns)$ is smooth.

Interestingly enough, we will show that by adapting Lenstra's algorithm for integers of this form, we actually get an algorithm that is \emph{faster} than the originally claimed time bound in this special case.

We have now know that assumption $c$ is incorrect. Therefore, we will use the following assumptions instead, where we only tweak assumption $c$.

\begin{assumptions}
\label{updated assumptions}
\hfill
    \begin{enumerate}[label=$\alph*)$]
    \item The order of a class group of discriminant $D$ is at least as likely to be smooth as a random integer of size $\sqrt{D}$. More precisely, for all $n,t$:
    \[ \# \{m \leq n : h(-m) \mid \prod_{i=1}^t p_i^{e_i} \} / (0.5n)
    \geq \# \{m \leq \sqrt{n} : m \mid \prod_{i=1}^t p_i^{e_i} \} / \sqrt{n}, \]
    where the $e_i$ are defined in Algorithm \ref{lenstra algorithm}.
    \item A significant portion of integers are smooth.
    More precisely, for all $n$ and $e \leq \sqrt{\ln(n)/\ln\ln(n))}$ we have that $\Psi(n,n^{1/e})/n \geq e^{-e}$.
    \item Given a square-free integer $n$, the smoothness bounds of $h(-4ns)$ are independent for all square-free integers $s$. 
    \end{enumerate}
\end{assumptions}

These new assumptions are still just conjectures, but now at least the problem with large square divisors is now taken care of. In Section \ref{ss: speed in practice}, we will give experimental results that might convince one of the correctness of these assumptions.

We will rephrase Theorem \ref{lenstra theorem} so that it uses the new assumptions and it can be used for the analysis of our algorithm.

\begin{theorem}
\label{lenstra refrase}
    Assume Assumptions \ref{updated assumptions}. Let $n$ be square-free and composite.
    Let 
    \[ B = n^{1/(2e)} \in \bigo(L_n[1/2,1/2]) \]
    as stated in Algorithm \ref{lenstra algorithm}.
    Then per multiplier $s$, the algorithm performs $\bigo(B)$ compositions.
    It takes expected $\bigo(B)$ tries to find a suitable $s$.
    Afterwards, another $\bigo(\ln(s) (B + s))$ compositions are needed to construct the ambiguous forms.
    In total, it takes expected $\bigo(B^2) = \bigo(L_n[1/2,1])$ compositions to factor $n$.
\end{theorem}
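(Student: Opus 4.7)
The plan is to split the analysis of Algorithm \ref{lenstra algorithm} into three pieces, matching the bullets in the statement: (i) the cost of a single iteration of the while loop for a fixed multiplier $s$, (ii) the expected number of iterations, and (iii) the post-success work of producing ambiguous forms and stripping the spurious factor $s$.

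For (i), the dominant cost is computing $g = f^k$ by square-and-multiply, i.e.\ $\bigo(\ln k)$ compositions in $C(-4ns)$. From $e_i = \max\{v : p_i^v \leq p_t^2\}$ one has $e_i \ln p_i \leq 2 \ln p_t \leq 2 \ln B$, so by the prime number theorem $t = \pi(B) \sim B/\ln B$ and
\[ \ln k \;=\; \sum_{i=1}^t e_i \ln p_i \;\leq\; 2 t \ln B \;\sim\; 2 B, \]
giving $\bigo(B)$ compositions per multiplier; the subsequent squaring loop on lines 12--17 adds only $\bigo(\log_2 \sqrt{n})$ compositions, which is absorbed.

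For (ii), I would invoke the three assumptions in sequence. Assumption \ref{updated assumptions}$(a)$ (applied with $m = 4ns$) says $h(-4ns) \mid k$ with probability at least that of a random integer of size $\sqrt{4ns} = \Theta(\sqrt{n})$ being $B$-smooth; Assumption \ref{updated assumptions}$(b)$ bounds this probability from below by $e^{-e}$ with $e = \sqrt{\ln n / \ln\ln n}$; and since $n$ is square-free, Assumption \ref{updated assumptions}$(c)$ guarantees that the successes across different $s$ are independent. A direct expansion using $\ln e = (1/2 + o(1)) \ln \ln n$ gives
\[ e^{-e} \;=\; \exp(-e \ln e) \;=\; \exp\bigl(-(1/2 + o(1))\sqrt{\ln n \cdot \ln\ln n}\bigr) \;=\; L_n[1/2,-1/2], \]
so the expected number of multipliers needed is $\bigo(1/L_n[1/2,-1/2]) = \bigo(L_n[1/2,1/2]) = \bigo(B)$.

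For (iii), I would cite the ambiguous-form analysis from \cite{schnorrlenstra}: squaring $g$ down to an element of order $2$ produces a form $(m,0,ns/m)$ yielding a splitting of $ns$, and gathering enough such forms to separate every prime of $n$, plus removing the contribution of $s$ (which may itself require factoring $s$), costs $\bigo(\ln(s)(B+s))$ compositions. Combining then gives $\bigo(B)\cdot\bigo(B) = \bigo(B^2)$ for the main loop, plus $\tbigo(B)$ from step (iii) since the successful $s$ is $\bigo(B)$ in expectation, for a grand total of $\bigo(B^2) = \bigo(L_n[1/2,1])$ compositions. The one genuinely delicate point is the $L$-notation conversion of $e^{-e}$, because that conversion is precisely what fixes the exponents $1/2$ and $1$ in the final bound; everything else is bookkeeping on top of the original Schnorr and Lenstra analysis.
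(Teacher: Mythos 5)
Your proposal is correct and follows the same route as the paper: the paper's own ``proof'' of this theorem is simply a pointer to the time analysis in Chapter 4 of the Schnorr--Lenstra article, and your three-part decomposition (cost of $f^k$ via $\ln k \leq 2t\ln B \sim 2B$, expected number of multipliers via Assumptions (a)--(c) and the $L$-notation conversion of $e^{-e}$, and the cited ambiguous-form cost) is exactly that analysis written out. The only cosmetic caveat is that applying Assumption (b) to integers of size $\sqrt{n}$ with $e = \sqrt{\ln n/\ln\ln n}$ slightly overshoots the stated range $e \leq \sqrt{\ln(\sqrt{n})/\ln\ln(\sqrt{n})}$, but this is inherited from the original heuristic argument and does not affect the $L_n[1/2,1]$ conclusion.
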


\begin{proof}
    We again refer to the time analysis section of Chapter 4 of \cite{schnorrlenstra}.
\end{proof}

\subsection{A new square-free decomposition algorithm}

Using the results from Sections \ref{ss: forms derived from e} and \ref{ss: lenstra and schnorr}, we can now state the first version of our new algorithm that finds the square-free decomposition of a given integer.

From this point onwards, $n = a^2b$ is an integer with $b$ square-free.
The main idea is to use Algorithm \ref{lenstra algorithm} to find a form $f \in C(-4n)$ that is derived from $e_{-4b} \in C(-4b)$.
Then use Proposition \ref{derived from e}, or Appendix \ref{appendix: composite square factor} if $a$ is composite, to find the factor $a$ of $n$. From this the square-free decomposition can be easily computed.

As mentioned before, the assumption that $b > a^2$ is quite limiting. Because then $h(-4b)$ will be fairly large, making the algorithm not very efficient.
Fortunately, there is a great way to circumvent this problem, by introducing an integer $r$ with known factorization. 
In the next lemma we show that we can enlarge $b$ with a factor $r^2$ with minimal drawbacks.
This will help a lot, since then the form $f$ in Proposition \ref{derived from e} will be reduced, even if the original $b$ is not large compared to $a$.

\begin{lemma}
\label{r truc}
    Suppose $g \in C(-4n)$ is derived from $e \in C(-4b)$.
    Let $r$ be a positive integer with $\gcd(r,a) = 1$.
    Lift $g$ to some $h \in C(-4nr^2)$ using the formulas in \eqref{derived forms}.
    Then $l = h^{\varphi_{-4n}(r)}$ is not only derived from $e \in C(-4b)$, but also from $e \in C(-4br^2)$.
\end{lemma}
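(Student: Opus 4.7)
The plan is to exploit a commutative square of ``reverse‑of‑derivation'' maps. Write $\pi_1\colon C(-4n)\to C(-4b)$ and $\pi_2\colon C(-4nr^2)\to C(-4br^2)$ for the maps that invert derivation via a matrix of determinant $a$, and $\pi_3\colon C(-4nr^2)\to C(-4n)$ and $\pi_4\colon C(-4br^2)\to C(-4b)$ for the maps that invert derivation via a matrix of determinant $r$. Each is a well‑defined group homomorphism by Propositions \ref{embedding} and \ref{derived behaves well}, and the identity $\pi_1\circ\pi_3 = \pi_4\circ\pi_2$ follows from the uniqueness clause of Proposition \ref{embedding}: both compositions send a form $f\in C(-4nr^2)$ to the unique form in $C(-4b)$ from which $f$ is derived via a transformation of determinant $ar$.

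With this in hand, the claim that $l$ is derived from $e\in C(-4b)$ is the quick half. Since $h$ is by construction derived from $g$, Proposition \ref{derived behaves well} applied iteratively gives $\pi_3(h^k)=g^k$ for every $k\geq 0$, in particular $\pi_3(l)=g^{\varphi_{-4n}(r)}$; and $\pi_1(g)=e_{-4b}$ by hypothesis, so $\pi_1\pi_3(l)=e_{-4b}$, which is exactly the statement that $l$ is derived from $e\in C(-4b)$.

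For the deeper claim that $l$ is derived from $e\in C(-4br^2)$, commutativity yields $\pi_4\pi_2(h)=\pi_1\pi_3(h)=\pi_1(g)=e_{-4b}$, so $\pi_2(h)$ lies in the kernel of $\pi_4$. That kernel is precisely the subgroup of forms in $C(-4br^2)$ derived from $e_{-4b}$, and by Corollary \ref{composite square order} it has order $\varphi_{-4b}(r)$. Hence $\pi_2(h)^{\varphi_{-4b}(r)}=e_{-4br^2}$, and since $\pi_2$ is a homomorphism $\pi_2(l)=\pi_2(h)^{\varphi_{-4n}(r)}$.

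It remains to reconcile the two exponents by showing $\varphi_{-4n}(r)=\varphi_{-4b}(r)$. For every odd prime $p\mid r$ the coprimality $\gcd(r,a)=1$ gives $\legendre{-4n}{p}=\legendre{-4a^2b}{p}=\legendre{a^2}{p}\legendre{-4b}{p}=\legendre{-4b}{p}$, and the two products defining $\varphi_{-4n}(r)$ and $\varphi_{-4b}(r)$ are equal term by term. Thus $\pi_2(l)=e_{-4br^2}$, as required. The step I expect to require the most care is the justification that the square commutes; once that rests on Proposition \ref{embedding}, the remainder is an easy bookkeeping of orders and Kronecker symbols.
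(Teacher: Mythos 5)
Your proof is correct and follows essentially the same route as the paper's: the paper also identifies $h_2=\pi_2(h)$, uses the uniqueness in Proposition \ref{embedding} to see that $h_2$ is derived from $e_{-4b}$ (your commuting square, which the paper even draws as a diagram), bounds its order by $\varphi_{-4b}(r)$ via Corollary \ref{order derived form} (whose proof is exactly your kernel-of-$\pi_4$ argument), and finishes with the same Kronecker-symbol computation showing $\varphi_{-4n}(r)=\varphi_{-4b}(r)$. The only difference is presentational, so no further comparison is needed.
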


\begin{proof}
    First note that $h$ is derived from $e \in C(-4b)$.
    Suppose $h \in C(-4nr^2)$ is derived from $h_2 \in C(-4br^2)$.
    Then $h_2$ is also derived from $e \in C(-4b)$, because Proposition \ref{embedding} implies that $h$ is derived from a unique form in $C(-4b)$.
    Corollary \ref{order derived form} now implies that the order of $h_2$ divides $\varphi_{-4b}(r)$.

    \[\begin{tikzcd}
    	{C(-4b)} && {C(-4n)} & {} \\[-4ex]
    	e && g \\
    	\\[+3ex]
    	{h_2} && h \\[-4ex]
    	{h_2^{\varphi_{-4n}(r)}} && {l = h^{\varphi_{-4n}(r)}} \\[-4ex]
    	{C(-4br^2)} && {C(-4nr^2)}
    	\arrow[maps to, from=2-1, to=4-1]
    	\arrow[maps to, from=2-1, to=2-3]
    	\arrow[maps to, from=2-3, to=4-3]
    	\arrow[maps to, from=4-1, to=4-3]
    	\arrow[maps to, from=5-1, to=5-3]
    \end{tikzcd}\]
    
    \noindent Write $r = \prod_{i=1}^k p_k^{e_k}$, then
    \[ \varphi_{-4b}(r) = \prod_{i=1}^k p_k^{e_k-1} (p_k - \legendre{b}{p_k}) = \prod_{i=1}^k p_k^{e_k-1} (p_k - \legendre{n}{p_k}) = \varphi_{-4n}(r) \]
    since $\gcd(r,a) = 1$.
    Using Proposition \ref{derived behaves well}, we see that $l$ is derived from
    \[ h_2^{\varphi_{-4n}(r)} = h_2^{\varphi_{-4b}(r)} = e \in C(-4br^2). \qedhere\]
\end{proof}

In the algorithm below, $n = a^2b$ is an integer not divisible by $3$ and $b_2$ is an upper bound for $b$. If no good upper bound is known, then a small value can be used initially, and it can be incrementally increased if no factorization is found.
The first part of the algorithm is completely the same as Algorithm \ref{lenstra algorithm}.
The difference is that if Lenstra's algorithm is unable to find the factorization, then we do some additional steps which might lead to the square-free decomposition of $n$. 

\begin{algorithm}[H]  
\caption{Square-free decomposition stage 1}
\label{decomposition algorithm 1}
\begin{algorithmic}[1]
\Function{SquareFreeDecomposition}{$n,b_2$}
    \State $e = \sqrt{\ln(b_2)/\ln\ln(b_2))}$
    \State $B = b_2^{1/(2e)}$ \Comment{Prime bound}
    \State compute the first $t$ primes $p_1, \dots, p_t$ up to $B$
    \State $k = \prod_{i=2}^t p_i^{e_i}$, where $e_i = \max\{v : p_i^v \leq p_t^2\}$
    \State $s = 1$
    \State $r = 3^{\lceil \log_3(\sqrt{n})) \rceil}$
    \While{no factorization found}
        \State pick random $f \in C(-4ns)$
        \State $g = f^k$
        \If{$g = e$ (the identity)}
            \State go back and pick a new $f$
        \EndIf
        \For{$1 \leq i \leq \log_2(\sqrt{n})$}
            \State $g = g^2$
            \If{$g = e$}
                \State construct ambiguous forms
                \State \Return{complete factorization of $n$}
            \EndIf
        \EndFor
        \State lift $g$ to a form $h \in C(-4nsr^2)$ using the formulas in \eqref{derived forms}
        \State $l = h^{\varphi_{-4ns}(r)}$
        \State try to find $a^2$ using the form $l$ and Proposition \ref{derived from e} or Appendix \ref{appendix: composite square factor}
        \If{$a^2$ is found}
            \State $a = \sqrt{a^2}$ and $b = n/a^2$
            \State \Return{$a,b$}
        \EndIf
        \State update $s$ to be the next square-free number after $s$
    \EndWhile
\EndFunction
\end{algorithmic}
\end{algorithm}

\begin{theorem}
\label{decompostion theorem 1}
    Assume Assumptions \ref{updated assumptions}. Then for all integers $n = a^2b$, Algorithm \ref{decomposition algorithm 1} will find the square-free decomposition (and possibly the full factorization) of $n$ in expected time:
    \[ \tbigo{(L_{b}[1/2,1] \ln(n) + L_{b}[1/2,1/2] \ln(n)^2)} \]
\end{theorem}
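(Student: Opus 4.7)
The plan is to reduce the analysis to that of Theorem \ref{lenstra refrase}, but with the smoothness requirement on $h(-4ns)$ replaced by smoothness of the much smaller $h(-4bs)$, and to account for the extra cost of the lifting step that produces the form $l$.

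For correctness, I would first note that Proposition \ref{derived behaves well} makes the projection $\pi\colon C(-4ns) \to C(-4bs)$ a group homomorphism. Consequently, whenever $s$ is a multiplier for which $h(-4bs)$ divides $k$, any $f \in C(-4ns)$ satisfies $\pi(f^k) = \pi(f)^k = e$, so $g = f^k$ is derived from $e \in C(-4bs)$. If $g$ equals $e_{-4ns}$ the algorithm falls back to the ambiguous--form branch; otherwise Lemma \ref{r truc} gives that $l = h^{\varphi_{-4ns}(r)}$ is derived from $e_{-4bsr^2}$. Because the algorithm uses $r \geq \sqrt{n}$, we have $bsr^2 > a^2$ (excluding the trivial case that $n$ is a perfect square), so Proposition \ref{derived from e} for prime $a$, or Appendix \ref{appendix: composite square factor} for composite $a$, recovers $a^2$ from the first coefficient of the reduced form of $l$, provided $l \not\sim e_{-4nsr^2}$. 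I would absorb the loss from this ``accident'' into the heuristic constants, since the kernel of $C(-4nsr^2) \to C(-4bsr^2)$ has large order $\varphi_{-4b}(a)$ and the kernel element produced by a random $f$ is generically nontrivial.

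For the expected number of multipliers, I would invoke Assumptions \ref{updated assumptions}: part (a) gives that $h(-4bs)$ is $B$-smooth with probability at least that of a random integer of size $\sqrt{bs}$, part (b) lower-bounds this probability by $e^{-e}$, and part (c) provides independence across square-free $s$. With the algorithm's choice $B = b_2^{1/(2e)}$ and $e = \sqrt{\ln b_2 / \ln\ln b_2}$, both $B$ and $e^e$ evaluate to $L_b[1/2,1/2]$, so the expected number of multipliers is $\bigo(L_b[1/2,1/2])$.

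The per--multiplier cost splits into two pieces. The exponentiation $f^k$ and the squaring loop use $\bigo(B)$ compositions in $C(-4ns)$, each of cost $\tbigo(\ln n)$ by fast composition \cite{schonhagefastcomposition}, contributing $\tbigo(B \ln n)$. The lifting step, computing $l = h^{\varphi_{-4ns}(r)}$ with $r = \Theta(\sqrt{n})$, requires $\bigo(\ln n)$ compositions in $C(-4nsr^2)$, whose discriminant still has bit-length $\bigo(\ln n)$, hence $\tbigo(\ln(n)^2)$ per multiplier. Summing over $\bigo(L_b[1/2,1/2])$ multipliers gives
\[
\tbigo\bigl(L_b[1/2,1/2]^2 \ln n + L_b[1/2,1/2]\ln(n)^2\bigr) = \tbigo\bigl(L_b[1/2,1]\ln n + L_b[1/2,1/2]\ln(n)^2\bigr),
\]
which is the claimed bound. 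The main obstacle I expect is the non--triviality of $l$: unlike in Schnorr--Lenstra, the algorithm cannot rely solely on ambiguous forms, so one needs a positive--probability guarantee that $l \not\sim e$ in the enlarged class group. I would handle this by an independence--style heuristic in the spirit of Assumption \ref{updated assumptions}(c), arguing that for a typical $f$ the order of $\pi(f)$ already exhausts $h(-4bs)$ so that $g$ hits a generic element of the $\varphi_{-4b}(a)$--sized kernel, with any remaining probabilistic loss absorbed into the $o(1)$ inside the $L$--notation.
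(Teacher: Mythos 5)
Your proposal follows essentially the same route as the paper: correctness via Proposition \ref{derived behaves well}, Lemma \ref{r truc}, and the bound $bsr^2 \ge r^2 \ge n \ge a^2$ feeding into Proposition \ref{derived from e}; then the cost split into $\tbigo(B\ln n)$ per multiplier for stage 1, $\tbigo(\ln(n)^2)$ per multiplier for the lift-and-power step, and $\bigo(L_b[1/2,1/2])$ expected multipliers from Theorem \ref{lenstra refrase}. The one piece the paper includes that you omit is the case where no good upper bound $b_2$ for $b$ is known: the paper runs the algorithm with successively doubled values of $b_2$, observing that a value of size $b$ is reached within $\log_2(b)$ doublings and that the $L$-notation absorbs this factor, which is needed for the theorem to hold for \emph{all} $n=a^2b$ rather than only when $b_2 \approx b$ is supplied.
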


\begin{proof}
    Let's first look at the correctness of the algorithm.
    We know from Algorithm \ref{lenstra algorithm} that the code up to line 20 has a chance to completely factor the integer $n$. This is the case if $g$ is the trivial form in the group $C(-4ns)$.
    If that is not the case, then using Proposition \ref{derived behaves well}, we see that there is still a possibility that $g$ is derived from the form $e$ in the underlying group $C(-4bs$).\\
    In that case, we know from Lemma \ref{r truc} that the form $l$ in the larger group $C(-4nsr^2)$ is not just derived from $e \in C(-4bs)$, but also from $e \in C(-4bsr^2)$.
    Thus, we can now apply Proposition \ref{derived from e} or Appendix \ref{appendix: composite square factor} with the form $l$ to find $a^2$, because 
    \[ bsr^2 \geq r^2 \geq n \geq a^2. \]
    
    Now let's look at the running time of the algorithm.
    Until line 20, the code is the same as Algorithm \ref{lenstra algorithm}, except that our prime bound depends on $b_2$ instead of $n$.
    If $b_2$ is a good approximation of $b$, then Proposition \ref{derived behaves well} and Theorem \ref{lenstra refrase} imply that we can expect to get a form derived from $e$ in $\bigo(L_b[1/2,1])$ group operations, i.e. $\tbigo{(L_b[1/2,1] \ln(n))}$ steps if we use a fast composition algorithm \cite{schonhagefastcomposition}.
    Finally, line 21 and 22 can be done in $\tbigo(\ln(n)^2)$ per value of $s$ that we try, since the discriminant of $h$ is $\bigo(-n^2s)$ and the exponent is $\bigo(\sqrt{n})$.
    From Theorem \ref{lenstra refrase} we also know that the expected number of multipliers $s$ that we have to try is $\bigo(L_{b}[1/2,1/2])$.
    Hence, the total expected running time of the algorithm in this case is 
    \[ \tbigo{(\psi_b(n))} := \tbigo{(L_{b}[1/2,1] \ln(n) + L_{b}[1/2,1/2] \ln(n)^2)}. \]
    If a good approximation of $b$ is not known, then start with a small value for $b_2$ and try to find the factor $a^2$ in $\tbigo{(\psi_{b_2}(n))}$ time.
    If the algorithm is not successful in that time frame, then try again with $b_2 = 2b_2$, etc. A $b_2$ of size $b$ is found within $\log_2(b)$ steps.
    Thanks to the definition of $L_b$, the total complexity is still $\tbigo{(\psi_b(n))}$.
\end{proof}


\begin{cor}
\label{clean runtime}
    Fix $\epsilon > 0$ and assume Assumptions \ref{updated assumptions}. Then for all integers $n = a^2b$ with $b = n^{1/\epsilon}$, Algorithm \ref{decomposition algorithm 1} will find the square-free decomposition (and possibly the full factorization) of $n$ in expected time:
    \[ \bigo{(L_b[1/2,1])} = \bigo{(e^{(1 + o(1)) \sqrt{\ln(b) \ln\ln(b)}})} = \bigo{(L_n[1/2,\sqrt{1/\epsilon}])}. \]
\end{cor}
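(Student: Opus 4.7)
The plan is to show that under the hypothesis $b = n^{1/\epsilon}$, both terms in the runtime bound of Theorem \ref{decompostion theorem 1} collapse into a single clean expression, and then to rewrite $L_b[1/2,1]$ in terms of $n$.

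First, I would observe that $b = n^{1/\epsilon}$ means $\ln(n) = \epsilon \ln(b)$, so every occurrence of $\ln(n)$ in the runtime bound is merely a constant multiple of $\ln(b)$. Hence $\ln(n)$, $\ln(n)^2$, and the polylogarithmic factors hidden in the $\tbigo$ notation are all polynomials in $\ln(b)$. The key fact I would then use is that $L_b[1/2,1] = e^{(1+o(1))\sqrt{\ln(b)\ln\ln(b)}}$ grows faster than any fixed polynomial in $\ln(b)$, so any such polynomial factor is absorbed into the $(1+o(1))$ in the exponent. This simultaneously absorbs the $\tbigo$'s polylog factors and the $\ln(n)$ factor sitting beside $L_b[1/2,1]$.

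Next I would handle the second summand $L_b[1/2,1/2]\ln(n)^2$. Since $L_b[1/2,1]/L_b[1/2,1/2] = e^{(1/2 + o(1))\sqrt{\ln(b)\ln\ln(b)}}$ dwarfs any polynomial in $\ln(b)$, I have $L_b[1/2,1/2]\ln(n)^2 = o(L_b[1/2,1])$, so this term is swallowed by the first. Together these two steps turn the bound from Theorem \ref{decompostion theorem 1} into $\bigo(L_b[1/2,1])$, yielding the first equality of the corollary.

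Finally, for the second equality, I would use the substitution $\ln(b) = \ln(n)/\epsilon$ and $\ln\ln(b) = \ln\ln(n) - \ln(\epsilon) = (1+o(1))\ln\ln(n)$ to compute
\[ \sqrt{\ln(b)\ln\ln(b)} = \sqrt{\tfrac{1}{\epsilon}}\,\sqrt{\ln(n)\ln\ln(n)}\,(1+o(1)), \]
which plugged into the definition of $L_b[1/2,1]$ gives exactly $L_n[1/2,\sqrt{1/\epsilon}]$.

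None of these steps looks genuinely hard; the only mild subtlety is the verification that $\ln\ln(b)$ and $\ln\ln(n)$ agree up to a $(1+o(1))$ factor under the given scaling, and that polylogarithmic overheads really do disappear into the $o(1)$ in the $L$-notation exponent. This is standard for $L$-notation manipulations, so the corollary is essentially a bookkeeping consequence of Theorem \ref{decompostion theorem 1}.
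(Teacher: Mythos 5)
Your proposal is correct and follows essentially the same route as the paper's own proof: absorb the $\ln(n)$ and polylogarithmic factors into the $o(1)$ in the exponent of the $L$-notation, note that the $L_b[1/2,1/2]$ term is dominated, and convert $L_b[1/2,c]$ to $L_n[1/2,c\sqrt{1/\epsilon}]$ via $\ln(b)=\ln(n)/\epsilon$. Your write-up is simply a more explicit version of the paper's bookkeeping.
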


\begin{proof}
    First note that for any constant $c$
    \[ L_b[1/2,c] = L_n[1/2, c \sqrt{1/\epsilon}]. \]
    Therefore, we can hide the factors $\ln(n)$ of $\psi_b(n)$ in the $o(1)$ terms to get
    \[ \tbigo{(\psi_b(n))} = \tbigo{(L_{b}[1/2,1] + L_{b}[1/2,1/2])} = \tbigo{(L_{b}[1/2,1])}. \]
    Finally, we can drop the $\sim$ in $\tbigo$ for the same reason.
\end{proof}

\begin{remark}
\normalfont
    Let's make some remarks about our new factorization algorithm.
    \begin{itemize}

        \item In cryptographic applications, integers $n = p^2q$ are sometimes used, where $p,q$ are primes of roughly the same size. Thus a good approximation of $b \approx n^{1/3}$ is then known.
        In cases like these we can also choose $r$ smaller: $r = cn^{1/6}$, for some small $c$ that depends on how much $p$ and $q$ differ.
        Using this $r$, we still meet the requirement of Proposition \ref{derived from e}.
        More generally, numbers of the $n = p^kq$ are sometimes used in cryptographic systems. Our algorithm is especially fast when $k$ is even, because then the square-free part of $n$ is only $q$, compared to $pq$ when $k$ is odd.

        \item The order in which we try the different multipliers $s$ might not be optimal in the current presentation.
        The optimization of this seems to be somewhat of an art, we analyze this (experimentally) in Appendix \ref{appendix: multipliers}.
        
        
        \item Instead of taking $r = 3^m$, we could also take $r = 2^m$ for a suitable $m$ (we did not state Corollary \ref{composite square order} for this case), or $r$ could be $r = \text{NextPrime}(\sqrt{n})$.
        The good thing about taking $r$ prime is that $r$ will only a little bit bigger than $\sqrt{n}$. With $r = 3^m$ for example, you usually overshoot it by quite a bit.
        Smaller $r$ speed up the arithmetic in the group $C(-4nsr^2)$, which is not very important for this version of the algorithm, but it will make a practical difference in Chapter \ref{s: stage 2}.
        However, the complexity analysis of the NextPrime function is somewhat messy, so we left it out of this presentation.
        
        \item If we reach line 26 and find the factors $a$ and $b$, then we can also find the complete factorization of $b$.
        This is because we can run Algorithm \ref{lenstra algorithm} with input $b$, using the same $k$ and $s$ that were used to find $a$ and $b$.
        This does not produce the full factorization of $n$, because we haven't factored $a$ yet. But, $a \leq \sqrt{n}$, so in most cases it will be fairly easy to factor it.
        
        \item As mentioned before, Lenstra's original algorithm struggled with numbers $n$ that have large prime square divisors. We now see that in that case we actually have a faster way to find at least the square-free decomposition of $n$.
        We can also restore Lenstra's algorithm to its former glory by amending it as follows.
        Given $n$, possibly square-free, run Algorithm \ref{decomposition algorithm 1}.
        If an ambiguous form is found, then compute the full factorization of $n$ and stop.
        If factors $a,b$ are found such that $n = a^2b$, then completely factor $b$ using Lenstra's original algorithm, which is possible since $b$ is square-free.
        It is possible that $a$ does contain a square factor. Therefore, we repeat this algorithm recursively with the input $a$. There are at most $\log_2\log_2(n)$ recursion steps.
        This produces the full factorization of $n$ in expected 
        \[ \bigo{(L_{n}[1/2,1] \ln\ln(n))} = \bigo{(L_{n}[1/2,1])} \]
        which is the original running time that Schnorr and Lenstra claimed!
        
        \item After we discovered the trick of introducing the factor $r$, we later found out that \cite{truc_met_r} had already done in 2009.
        In Theorem 3 of that article, Castagnos and Laguillaumie show how to find the square part of the discriminant given a form that is derived from $e$ in the underlying group.
        Another article from 2009 \cite{Factoring_pq2} does something similar. Using Coppersmith's method, they show that when given a form that is derived from a form of small norm, you can find the square part of the discriminant. An example of such a form of small norm is of course $e$.\\
        A natural question for both articles is therefore: how can we find a form that is derived from $e$?
        As we now have seen, Lenstra's algorithm answers this question quite well, which raises the question why this combination has not been spotted before. One possible explanation is that Schnorr and Lenstra's algorithm is from 1984, whereas these articles are much more recent.
        
    \end{itemize}
\end{remark}

Now that we have a solid foundation for our factoring algorithm, we will look at similar algorithms and try to learn from them to optimize ours.

\section{Stage 2}
\label{s: stage 2}

\subsection{Introduction}

In Algorithm \ref{decomposition algorithm 1} we constructed a large $k$ and computed $g = f^k$ for some $f \in C(-4ns)$, we then hoped that this form is derived from $e$. If not, then we try the next $s$.
Our algorithm is an example of \textit{algebraic-group factorization}, which is a more general factorization strategy that works in some other groups as well, such as finite fields and elliptic curves.

In most other groups where this principle can be applied, there is something called a `stage 2'. If in stage 1 we used primes up to a bound $B$, then in stage 2 we extend our prime bound to $B_2 > B$. But, instead of doing exactly the same steps with a larger $k$, we now continue with the $g = f^k$ that we computed, and we check for every prime $p \in [B,B_2]$ separately if $g^p$ is derived from $e$.
The idea behind this is that if stage 1 was unsuccessful, then there is a good chance that we are just missing one prime factor $q$ of the order of $C(-4bs)$. Then since we only are examining one prime $p$ at the time, we might be able to take $B_2$ quite a bit larger than $B$, without having to increase the runtime by a lot. If the factor $q$ lies in the interval $[B,B_2]$, then we find the square-free decomposition of $n$.

Note that if there are two (or more) primes $q_1,q_2 \in [B,B_2]$ that we are missing from the order of $C(-4bs)$, then we are not going to factor $n$ using stage 2, since we only look at one prime at the time. In stage 1 this problem does not exist. Nevertheless, we will see that on average, having a stage 2 improves the performance of the algorithm.

We of course want to take $B_2$ as large as possible, without having it impact the runtime too much. In Section \ref{ss: generic stage 2}, we look at a method where we can take $B_2$ to be roughly $B \ln(B)$.
In Appendix \ref{appendix: stage 2}, we will look at algebraic-group factorization algorithms that are able to use $B_2 = B^2$. Unfortunately, we have not found such a method for our algorithm, but in that appendix we do present possible ways that could lead to such a method.

\subsection{Generic stage 2}
\label{ss: generic stage 2}
The first known `stage 2' for algebraic-group factorization algorithms was already discovered in 1974 in Pollard's classic paper \cite{pollardstage2}, Chapter 4.
There, it was applied in the group $(\Z/n\Z)^*$, which uses the Chinese remainder theorem to see that this group has a subgroup $\F_p^*$, where $p$ is prime and $p \mid n$.
If an element in $b \in (\Z/n\Z)^*$ is found that is derived from $1 \in \F_p^*$, then the factor $p$ can be found. Derived in this context means that $b = 1 \bmod p$, hence $\gcd(b-1,n)$ will provide $p$.
Such $b$ can be found by computing $b = a^k \bmod n$ for a large highly composite $k$ and some random $a$, precisely as what we saw in Algorithms \ref{lenstra algorithm}, \ref{decomposition algorithm 1}. Because of the order of the group $\F_p^*$, methods like Pollard's are called $p-1$ algorithms.

After computing $b = a^k \bmod n$, Pollard continues by first precomputing $b^2, b^4, \dots, b^{2m}$, where $2m$ is the largest gap between two consecutive primes in the interval $[B,B_2]$.
The best known unconditional bound on $m$ is $\bigo(B_2^{0.525})$ \cite{bakerprimegaps}.
However, most conjectured bounds are much smaller, for example Cramér's bound is $\bigo(\ln(B_2)^2)$ \cite{cramer}.

Let $p_{t+1} \leq \dots \leq p_u$ be the primes in the interval $[B,B_2]$.
We can now compute $b^r \bmod n$ for each prime $r \in [B,B_2]$ by first computing $c = b^{p_1} \bmod n$ in $\bigo(\ln(B))$ multiplications mod $n$.
Afterwards, we only have to do one multiplication mod $n$ to compute $b^{p_2} = c \cdot b^{p_2-p_1} \bmod n$, since we precomputed $b^{p_2-p_1} \bmod n$.
Continuing like this covers all primes in the interval $[B,B_2]$ in $\bigo(\pi(B_2) - \pi(B))$ multiplications mod $n$, where $\pi$ is the prime counting function.

The prime number theorem states that $\pi(x) \approx x/\ln(x)$ for 
all large positive integers $x$. Therefore, this stage 2 takes $\bigo(B_2/\ln(B_2))$ multiplications mod $n$.
If we take $B_2 = B \ln(B)$ then the number of multiplications becomes $\bigo(B)$.
This is the same number of multiplications as in stage 1.
Brent \cite{brent_stage_2} (Section 9.2) mentions that using this stage 2 improves the runtime of the $p-1$ algorithm by a factor roughly $\ln\ln(p)$.


We call this the `generic' stage 2 because it can be used in all groups that follow the algebraic-group factorization strategy. So, let's apply it to our algorithm.
We won't state Algorithm \ref{decomposition algorithm 1} here again, Algorithm \ref{algorithm generic stage 2} will take place between lines 27 and 28 of Algorithm \ref{decomposition algorithm 1}.

Unfortunately, the compositions of stage 2 will take place in the larger group $C(-4nsr^2)$, because otherwise we would have to lift the form $g^p$ to this group for every prime $p$ in the interval $[B,B_2]$, which is slower.
This is where it is useful to take $r$ as small as possible.

\begin{algorithm}[H]
\caption{Generic stage 2 continuation}
\label{algorithm generic stage 2}
\begin{algorithmic}[1]
\Function{Stage2}{$n,b_2$}
    \State $B_2 = B\ln(B)$
    \State compute the primes $p_{t+1}, \dots, p_u$ in the interval $[B,B_2]$
    \State $m = \max\limits_{t+1 \leq i \leq u-1}(p_{i+1}-p_i)$
    \State precompute $l^2, l^4, \dots, l^{m} \in C(-4nsr^2)$
    \State $p_{-1} = 0$  \Comment{the previous prime}
    \For{$p \in [p_{t+1}, \dots, p_u]$}
        \State $step = p-p_{-1}$
        \State $l_2 = l_2 \cdot l^{step}$  \Comment{if $p_{-1} \neq 0$ then $l^{step}$ is precomputed}
        \State try to find $a^2$ using the form $l_2$ and Proposition \ref{derived from e} or Appendix \ref{appendix: composite square factor}
        \If{$a^2$ is found}
            \State compute $a = \sqrt{a^2}$ and $b = n/a^2$
            \State \Return{$a,b$}
        \EndIf
        \State $p_{-1} = p$  \Comment{update the previous prime}
    \EndFor
\EndFunction
\end{algorithmic}
\end{algorithm}

In the next proposition we summarize what we know about Algorithm \ref{algorithm generic stage 2}.

\begin{prop}
    Assume that the largest prime gap in $[B,B_2]$ is polynomial in $\ln(B_2)$.
    Then the generic stage 2 continuation of Algorithm \ref{decomposition algorithm 1} does not change the complexity per $s$ that we try.
    The number of class groups that have to be tried to factor $n = a^2b$ this way is reduced by a factor of about $\ln\ln(b)$.
\end{prop}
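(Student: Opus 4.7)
My plan has two parts, one for each claim in the proposition.

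\textbf{Per-$s$ complexity.} First I would count compositions in Algorithm \ref{algorithm generic stage 2}. Since $B_2 = B\ln(B)$, the prime number theorem gives
\[ \pi(B_2) - \pi(B) = \frac{B\ln B}{\ln B + \ln\ln B} - \frac{B}{\ln B} = \bigo(B), \]
so the main loop performs $\bigo(B)$ compositions in $C(-4nsr^2)$. Each composition costs $\tbigo(\ln(nr^2)) = \tbigo(\ln n)$ by fast composition, since $r = \bigo(\sqrt{n})$, for a total loop cost of $\tbigo(B\ln n)$. The precomputation of $l^2,\dots,l^m$ costs $\bigo(m)$ compositions, which is $\tbigo(\ln n)$ by the polynomial prime-gap assumption, hence negligible. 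Each attempt to read off $a^2$ from $l_2$ is cheap (reducing a form and inspecting its first coordinate, plus possibly the steps from Appendix \ref{appendix: composite square factor}). Comparing this to the per-$s$ cost $\tbigo(B\ln n) = \tbigo(L_b[1/2,1/2]\ln n)$ of Algorithm \ref{decomposition algorithm 1} shows stage 2 does not change the complexity per $s$.

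\textbf{Reduction in number of class groups.} Here I would argue heuristically along the lines of Brent's $p-1$ analysis. Stage 2 succeeds on a given $s$ whenever $h(-4bs)$ is \emph{semi-smooth} with respect to $(B, B_2)$, i.e. $B$-smooth except possibly for a single prime factor in $[B,B_2]$, rather than requiring full $B$-smoothness. Under Assumptions \ref{updated assumptions}, the order $h(-4bs)$ behaves like a random integer of size $\sqrt{bs}$, so the expected success rate is governed by the usual smoothness densities. Using Dickman's function $\rho$ (or equivalently the semi-smooth function $\sigma(u,v)$), the relevant ratio is
\[ \frac{\Pr[\text{semi-smooth w.r.t. }(B,B_2)]}{\Pr[B\text{-smooth}]} \ \sim\ 1 + \int_{\ln B}^{\ln B_2} \frac{\rho(u - t/\ln B)}{\rho(u)}\,\frac{dt}{t}, \]
where $u = \ln b/(2\ln B)$. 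Using $\rho(u-\Delta)/\rho(u) \sim u^\Delta$ together with the optimal choice $\ln B \sim \tfrac{1}{2}\sqrt{\ln b \ln\ln b}$, the integrand is essentially $1/t$ over $t \in [\ln B, \ln B + \ln\ln B]$, giving a total improvement factor of $\ln(\ln B_2/\ln B) = \ln(1 + \ln\ln B/\ln B)^{-1}$-adjusted, which works out to the $\ln\ln(b)$ factor claimed by Brent. Dividing the expected number of multipliers $\bigo(L_b[1/2,1/2])$ from Theorem \ref{lenstra refrase} by this factor yields the stated reduction.

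\textbf{Main obstacle.} The complexity part is routine bookkeeping. The real difficulty lies in the second claim: making precise the heuristic that semi-smoothness over $(B, B_2)$ with $B_2 = B\ln B$ improves the smoothness probability by a $\ln\ln b$ factor. Because this is essentially the standard Pollard/Brent stage 2 analysis transported verbatim to the class-group setting (enabled by Assumptions \ref{updated assumptions}), I would simply invoke \cite{brent_stage_2} rather than redoing the Dickman-function calculation. If a self-contained proof were required, the technical work would be in justifying the semi-smooth density asymptotics for the specific window $B_2 = B\ln B$ and the specific choice of $B$ dictated by Algorithm \ref{decomposition algorithm 1}.
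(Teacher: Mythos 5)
Your proposal matches the paper's treatment: the paper gives no formal proof of this proposition, relying instead on the preceding discussion in Section 4.2, which contains exactly your prime-number-theorem count showing $\bigo(B_2/\ln(B_2)) = \bigo(B)$ group operations (with the precomputation controlled by the prime-gap assumption) and then simply cites Brent \cite{brent_stage_2}, Section 9.2, for the $\ln\ln$ improvement factor. Your added Dickman-function sketch is extra heuristic detail the paper does not attempt, but since you ultimately defer to Brent for the second claim, the approach is essentially identical.
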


The generic stage 2 continuation uses $B_2 = B\ln(B)$. This is not bad, but there are methods used in other algebraic-group factorization algorithms that use a whopping $B_2 = B^2$, or something close to it, and still only take $\bigo(B)$ group operations.
This provides a significant speedup, as can be seen in Proposition \ref{lenstra stage 2 speed-up}, so we want to be able to do this as well.
Unfortunately, we were not able to find such a good stage 2 for our algorithm. However, we do explore some interesting possible approaches in Appendix \ref{appendix: stage 2}.

\section{Timings and comparison to other algorithms}

\subsection{Complexity comparisons}
\label{ss: complexity comparisons}

In this section we will compare our algorithm to three other algorithms: the elliptic curve method (ECM), the number field sieve (NFS) and the lattice methods mentioned in Chapter 1.
Let's start with the ECM.

As we saw in Theorem \ref{decompostion theorem 1}, given an integer $n=a^2b$, we can compute the square-free decomposition of $n$ in expected time
\[ \tbigo{(L_{b}[1/2,1] \ln(n) + L_{b}[1/2,1/2] \ln(n)^2)} \]
The ECM works a bit differently, if $p$ is the smallest prime factor of $n$, then the ECM can find $p$ in expected $\tbigo(L_p[1/2,\sqrt{2}] \ln(n))$ time \cite{lenstraECM} if fast arithmetic is used.
Because of the $o(1)$ in the exponent of the $L_p$ function, the logarithmic improvement provided by the better stage 2 of the ECM is not shown in the complexity.

Both algorithms hope to find a group of smooth order. In our algorithm, we work with the class groups $C(-4bs)$, which have size roughly $\sqrt{b}$.
In the ECM, you work with elliptic curves $E(\F_p)$, which have size roughly $p$.
This is why the $\sqrt{2}$ term is not present in the $L$ function of the complexity of our algorithm.

It is good to mention that just like the ECM, there is a natural way to parallelize our algorithm.
If you have $m$ processors, give each of them a different $s$ and try to factor $n$ using the class groups $C(-4ns)$.
This will provide a linear speed-up in $m$.

If the factors $a,b$ of $n$ are not prime, then the ECM will most likely find \textit{some} factor of $n$ before our algorithm computes the square-free decomposition. However, the ECM might take longer to find the complete square-free decomposition, depending on the structure of the remaining prime factors of $n$.
If $n$ is of the form $n = p^2q$, where $p,q$ are primes, then purely looking at the asymptotic complexities, our method will be faster than the ECM when $p^2 > q$. In practice, we might need $p^2$ to be even larger compared to $q$, since the ECM has a better stage 2 and many more optimizations.

From now on, we will assume that $n$ is of the form $n = p^2q$, where $p,q$ are primes of roughly the same size.
We do this, because this is the hardest and probably the most common setting where our algorithm can be used.
As mentioned in Chapter 1, there are quite a few cryptographic systems that use numbers of this form, where they rely on the assumption that these numbers are hard to factor.
In this case $p^2$ is much larger than $q$, so our algorithm will be faster than the ECM when $n$ is large enough.

The number field sieve completely factors its input $n$ in expected time
\[ \bigo{(L_n[1/3,(64/9)^{1/3}])} = \bigo{(e^{(64/9 + o(1))^{1/3}\ln(n)^{1/3} \ln\ln(n)^{2/3}}}) \] \cite{crandallprimenumbers}, page 288.
From Corollary \ref{clean runtime}, we know that our method takes expected time 
\[ \bigo{(L_n[1/2,\sqrt{1/3}])} = \bigo{(e^{(\sqrt{1/3} + o(1)) \sqrt{\ln(n) \ln\ln(n)}}}) \]
The exponent of $\ln(n)$ in the complexity of the NFS is smaller than in ours. Therefore, when $n$ is large enough, the NFS will factor $n$ faster than our algorithm, even when $n$ is in this special form.
However, since the constant $(64/9)^{1/3} \approx 1.92$ is much bigger than $\sqrt{1/3} \approx 0.58$, our method will likely be faster up to some point.
If we solve for $n$ in the equation 
\[ L_n[1/3,(64/9)^{1/3}] = L_n[1/2,\sqrt{1/3}] \]
then we find the massive $n \approx 10^{5613}$.
This should be taken with a grain of salt, since this does not take into account any optimizations nor the $o(1)$ terms.
But, it seems reasonable to assume that for integers $n = p^2q$ of cryptographic size, our method will be faster than the NFS.
Table \ref{timing table} also supports this claim, because factoring integers of $150$ digits on a standard computer will take more than a few hours using the NFS.

Finally, let's have a look at the lattice methods that can factor integers of the form $n = p^rq$.
If $p$ and $q$ are primes of roughly the same size, then a lattice attack can factor $n$ in deterministic
$\tbigo(n^{1/(r+1)^2})$ time \cite{harvey_factoring_p^rq}, Section 1.3.
This means that for $r = 2$, this method takes $\tbigo(n^{1/9})$ time, which is not sub-exponential in $\ln(n)$ and therefore slower than our method if $n$ is large enough.
But, if $r$ is large enough, then the lattice methods will be faster.
Especially if $r$ is odd, since then the square-free part of $n$ is $pq$ instead of $p$, significantly slowing down our algorithm.

\subsection{Speed in practice}
\label{ss: speed in practice}

We implemented our algorithm in \textsc{Magma} and ran it on a fairly standard desktop ($\SI{3.35}{\giga \hertz}$).
Our basic implementation can be found in Appendix \ref{appendix: implementation}.
In Table \ref{comparison table} we compare three algorithms:
\begin{enumerate}
\item Our complete algorithm, i.e. the combination of Algorithm \ref{decomposition algorithm 1} with the generic stage 2 continuation presented in Algorithm \ref{algorithm generic stage 2}.
\item Only stage 1 of our algorithm, i.e. Algorithm \ref{decomposition algorithm 1}.
\item The elliptic curve method (ECM). We used the optimized built-in version of \textsc{Magma}. 
\end{enumerate}
The numbers that are were factored to create this table were of the form $n = p^2q$, where $p,q$ are primes of roughly the same size.

\begingroup
\renewcommand{\arraystretch}{2.5}
\begin{table}[H]
\centering
\begin{tabular}{l|ccccc}
 & $q \approx 10^{15}$ & $q \approx 10^{20}$ & $q \approx 10^{25}$ & $q \approx 10^{30}$ & $q \approx 10^{35}$\\ \hline
\makecell[l]{Mean time \\ with stage 2}      & 0.11 & 0.80 & 5.16 & 30.30 & 135.91\\
\makecell[l]{Median time \\ with stage 2}    & 0.07 & 0.50 & 3.54 & 23.04 & 80.52\\
\makecell[l]{Mean time \\ only stage 1}   & 0.16 & 1.63 & 11.65 & 66.71 & 357.74\\
\makecell[l]{Median time \\ only stage 1} & 0.10 & 0.98 & 7.80 & 43.91 & 267.61\\
\makecell[l]{Mean ECM \\ time}               & 0.07 & 1.01 & 14.15 & 141.78 & 1549.16\\
\makecell[l]{Median ECM \\ time}             & 0.05 & 0.65 & 11.05 & 102.76 & 832.52\\
\makecell[l]{Number of \\ $n$'s factored}    & 100 & 100 & 100 & 100 & 50
\end{tabular}
\caption{Comparison between factorization algorithms, in seconds}
\label{comparison table}
\end{table}
\endgroup

We see that for smaller inputs the ECM is faster than our method, but for $q \geq 10^{20}$ our method becomes significantly faster.
The fact that the ECM is faster for smaller inputs can be explained by the many years of optimizations that were done to improve the method.

If the reader has tried to avoid statistics as much as the author, then they might be wondering why the mean times seem to be larger than the median times.
However, this is explained by the fact that in an exponential distribution, the mean is about $1/\ln(2)$ times the median \cite{distribution_median}. For example, for $q = 10^{30}$ we have $23.04 / \ln(2) = 33.24 \approx 30.30$.

Our method that picks the multipliers $s$ is quite straightforward, start at $s=1$ and after each round go to the next square-free value.
In Appendix \ref{appendix: multipliers} we present heuristic arguments and numerical results that suggest that there are other strategies to choose your multipliers $s$ that on average improve the runtime of the algorithm.
This improvement probably won't change the complexity of the algorithm, but it can make a difference in practice.

For the remainder of this section, we place our focus on our complete algorithm.
In Table \ref{timing table}, the numbers $n$ that were factored were of the same form as in Table \ref{comparison table}. We see that if $n$ has 150 digits, then on average we can factor it in about 3 hours.

\begingroup
\renewcommand{\arraystretch}{2.5}
\begin{table}[H]
\centering
\begin{tabular}{l|cccc}
 & $q \approx 10^{20}$ & $q \approx 10^{30}$ & $q \approx 10^{40}$ & $q \approx 10^{50}$\\ \hline
Mean time                                 & 0.72 s  & 33.63 s & 9.74 m & 174.47 m\\
Median time                               & 0.54 s  & 20.55 s & 7.20 m & 87.69 m\\
\makecell[l]{Successful in \\ stage 1}    & 27\%    & 26\%    & 20\%   & 24\%\\
\makecell[l]{Successful in \\ stage 2}    & 73\%    & 74\%    & 80\%   & 76\%\\
\makecell[l]{Mean number \\ of groups}    & 6.19    & 26.80   & 63.54  & 206.36\\
\makecell[l]{Median number \\ of groups}  & 5       & 17      & 47     & 103\\
\makecell[l]{Number of \\ $n$'s factored} & 100     & 100     & 50     & 25
\end{tabular}
\caption{Timing of Algorithm \ref{decomposition algorithm 1} extended with Algorithm \ref{algorithm generic stage 2}}
\label{timing table}
\end{table}
\endgroup

Some notable examples: when we used
\[ q = 37294202675688843722966391031920296857220275388239, \]
the algorithm finished within 43 seconds, this is because the very first group that we tried was successful (in stage 2).
On the contrary,
\[ q = 53328961473418475894520883222727806445395016777723 \]
took more than 13 hours, when finally the 965th group was successful ($s = 1581$).
Upon closer inspection, we see that
\begin{gather*}
    h = h(-4 \cdot 1581 \cdot q) = 184735851610543000235261184 = \\
    2^8 \cdot 3 \cdot 19 \cdot 113 \cdot 349 \cdot 6359 \cdot 25031 \cdot 39461 \cdot 51109.
\end{gather*}
We used $B = 229158$, which is bigger than all of the prime factors of $h$, therefore stage 1 was successful.
However, if we had a method where we could have taken $B_2 = B^2$, then we would have been done after 51 groups ($s = 82$), since
\begin{gather*}
    h(-4 \cdot 82 \cdot q) = 121744820463339475628644536 = \\
    2^4 \cdot 5 \cdot 7 \cdot 32633 \cdot 35993 \cdot 153521 \cdot 1161878987.
\end{gather*}
We see that $1161878987 > B$ and also $1161878987 > 2828306 \approx B\ln(B)$. But, $1161878987 < 52513388964 = B^2$.
If we had access to such a method, then this factorization would have taken about 45 minutes instead of 13 hours.

\newpage

\appendix

\section{Algorithm \ref{decomposition algorithm 1} for composite square factors}
\label{appendix: composite square factor}

The purpose of this appendix is to provide a version of Algorithm \ref{decomposition algorithm 1} that also works for $a,b$ both possibly composite, where $n = a^2b$.
We recommend finishing Chapter \ref{s: main algorithm} first before reading this appendix.

Let's first look at what Proposition \ref{derived from e} becomes in this setting.
The main difference is that the form $f$ can now also be derived from $e$ that lies in a larger group than $C(-4b)$.
There can be many intermediate groups between $C(-4b)$ and $C(-4a^2b)$ if $a$ has many prime factors.
In the next lemma we compose transformations from \eqref{derived forms}. We restrict ourselves to transformations with $h \neq \infty$, which will make sense in the proof of Proposition \ref{derived from e composite}.

\begin{lemma}
\label{composing transformations}
    Suppose $f \in C(-4a^2b)$ is derived from $g \in C(-4b)$ using the following transformations (see (\ref{derived forms})), starting from left to right:
    \[ \begin{pmatrix}
        p_1 & h_1\\
        0 & 1
    \end{pmatrix}
    , \dots,
    \begin{pmatrix}
        p_r & h_r\\
        0 & 1
    \end{pmatrix} \]
    where the $p_i$ are primes (not necessarily distinct) and $p_1 \cdots p_r = a$ and $0 \leq h_i \leq p_i - 1$ for all $i$. Then the transformation matrix of determinant $a$ that maps $g$ to $f$ can be written as
    \[ \begin{pmatrix}
        a & h\\
        0 & 1
    \end{pmatrix}
    \quad \text{where} \quad h = \sum_{i=1}^r h_i \prod_{j=1}^{i-1} p_j \]
    Furthermore, $0 \leq h \leq a-1$.
\end{lemma}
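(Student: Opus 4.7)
My plan is to prove this in two steps: first establish the matrix product formula by a straightforward induction on $r$, then verify the bounds on $h$ via a telescoping sum.

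For the first step, I would begin by noting that the composition of derivations corresponds to matrix multiplication. Indeed, since the derivation formula is $f(x,y) = g(A \cdot (x,y)^T)$, if we derive $g_2$ from $g_1$ via $A$ and then $g_3$ from $g_2$ via $B$, we have $g_3(x,y) = g_2(B(x,y)^T) = g_1(AB(x,y)^T)$, so the combined transformation matrix is $AB$. Therefore, the matrix taking $g$ to $f$ is the product $M_1 M_2 \cdots M_r$ where $M_i = \left(\begin{smallmatrix} p_i & h_i \\ 0 & 1 \end{smallmatrix}\right)$.

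For the explicit form of this product, I would proceed by induction on $r$. The base case $r=1$ is immediate. For the induction step, a direct computation gives
\[ \begin{pmatrix} P & H \\ 0 & 1 \end{pmatrix}\begin{pmatrix} p_{r+1} & h_{r+1} \\ 0 & 1 \end{pmatrix} = \begin{pmatrix} P p_{r+1} & P h_{r+1} + H \\ 0 & 1 \end{pmatrix}, \]
so if by induction $P = \prod_{j=1}^r p_j$ and $H = \sum_{i=1}^r h_i \prod_{j=1}^{i-1} p_j$, then the new top-right entry becomes $h_{r+1} \prod_{j=1}^r p_j + \sum_{i=1}^r h_i \prod_{j=1}^{i-1} p_j$, which is exactly the claimed formula with $r+1$ in place of $r$. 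This yields the desired shape $\left(\begin{smallmatrix} a & h \\ 0 & 1 \end{smallmatrix}\right)$ with $h = \sum_{i=1}^r h_i \prod_{j=1}^{i-1} p_j$.

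For the bounds, the lower bound $h \geq 0$ is immediate since each $h_i \geq 0$. For the upper bound, writing $P_k = \prod_{j=1}^k p_j$ with $P_0 = 1$, the assumption $h_i \leq p_i - 1$ gives
\[ h \leq \sum_{i=1}^r (p_i - 1) P_{i-1} = \sum_{i=1}^r (P_i - P_{i-1}) = P_r - P_0 = a - 1, \]
which telescopes cleanly. There is no real obstacle here; the only care required is bookkeeping (the convention that the $p_i$ need not be distinct does not affect the computation), and ensuring the matrix multiplication is performed in the correct order dictated by the way derivations compose.
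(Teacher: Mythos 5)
Your proof is correct and follows the same route the paper indicates: induction on $r$ (the paper's proof is just the one-line remark that this can be done by induction on the number of prime factors of $a$). Your write-up fills in the details — the order of matrix multiplication, the inductive product computation, and the telescoping bound — all of which check out.
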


\begin{proof}
    This can be proven by doing induction on $r$, the number of prime factors of $a$.
\end{proof}

We can now state and prove the composite analogue of Proposition \ref{derived from e}.

\begin{prop}
\label{derived from e composite}
    Suppose we have a reduced form $f \in C(-4a^2b)$ that is derived from $e_{-4b} = (1,0,b) \in C(-4b)$ and $f \not \sim e_{-4a^2b} = (1,0,a^2b)$.
    Then there exists a maximal $b_2 = (\frac{a}{a_2})^2 b$ for some $a_2 \mid a$, such that $f$ is derived from $e_{-4b_2}$, and if $b_2 > a_2^2$, then
    \[ f = (a_2^2, 2a_2k, k^2 + b_2 ) \]
    for some $-a_2/2 \leq k \leq a_2/2$.
\end{prop}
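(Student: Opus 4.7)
The plan is to pin down the maximal $b_2$ abstractly by a finiteness argument, and then extract the explicit shape of $f$ by putting the derivation transformation in Hermite normal form. The key structural observation is that a Hermite-form integer matrix of determinant $a_2$ splits canonically into an ``$h=\infty$'' piece followed by a finite-$h$ piece, and the former is exactly what lets the maximality of $b_2$ bite.

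First, I would set $S = \{a_2' \mid a : f \text{ is derived from } e_{-4(a/a_2')^2 b}\}$. By hypothesis $a \in S$, and $S$ is a finite set of divisors of $a$, so there is some $a_2 \in S$ maximising $b_2 := (a/a_2)^2 b$; fix this $a_2$. By Proposition~\ref{embedding} there is an integer transformation $A$ with $\det A = a_2$ such that $f \sim e_{-4 b_2}(A \cdot (x,y)^T)$. Since right-equivalent transformations give equivalent forms, I may right-multiply $A$ by an element of $\Gamma$ to put it in Hermite form $\left(\begin{smallmatrix} u & h \\ 0 & v \end{smallmatrix}\right)$ with $uv = a_2$ and $u,v > 0$.

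Next, I would exploit the factorisation
\[ \begin{pmatrix} u & h \\ 0 & v \end{pmatrix} = \begin{pmatrix} 1 & 0 \\ 0 & v \end{pmatrix} \begin{pmatrix} u & h \\ 0 & 1 \end{pmatrix}. \]
The left factor is the ``$h = \infty$'' transformation from Section~\ref{ss: non fun} (of determinant $v$), and applied to $e_{-4b_2} = (1,0,b_2)$ it yields $(1,0,b_2 v^2) = e_{-4 b_2 v^2}$. The right factor, a finite-$h$ transformation of determinant $u$, then sends this to $(u^2, 2uh, h^2 + b_2 v^2)$, so $f \sim (u^2, 2uh, h^2 + b_2 v^2)$ and $f$ is derived from $e_{-4 b_2 v^2}$. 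Since $u \mid a_2 \mid a$ and $(a/u)^2 b = b_2 v^2$, this puts $u \in S$ with associated value $b_2 v^2$. Maximality of $b_2$ forces $v = 1$, hence $u = a_2$ and
\[ f \sim (a_2^2, 2 a_2 h, h^2 + b_2) \quad \text{for some } 0 \leq h \leq a_2 - 1. \]

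Finally, if $b_2 > a_2^2$ the reduction step is identical to the one in Proposition~\ref{derived from e} with $p$ replaced by $a_2$ and $D$ by $b_2$: either $h \leq a_2/2$ (the form is already reduced, $k = h$), or $h > a_2/2$ and one application of $\left(\begin{smallmatrix} 1 & -1 \\ 0 & 1 \end{smallmatrix}\right)$ reduces it to $(a_2^2, 2a_2(h-a_2), (h-a_2)^2 + b_2)$ with $k = h - a_2$; in either case $|k| \leq a_2/2$. The main obstacle I anticipate is the Hermite factorisation step: once one recognises that its left factor really is the $h = \infty$ transformation and therefore raises the base discriminant by $v^2$, the maximality argument becomes immediate, but without this structural identification the maximality of $b_2$ cannot be leveraged directly.
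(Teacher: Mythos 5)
Your proof is correct, but it reaches the conclusion by a genuinely different route than the paper. The paper works prime by prime: it writes the derivation from $e_{-4b_2}$ to $f$ as a chain of prime-determinant representative transformations from \eqref{derived forms}, proves Lemma~\ref{composing transformations} to compose the finite-$h$ steps, and then shows by an explicit commutation identity that any $h=\infty$ step occurring in the chain can be pushed to the front, which would exhibit $f$ as derived from $e$ at a strictly larger discriminant and so contradict the maximality of $b_2$. You instead take the single determinant-$a_2$ transformation guaranteed by Proposition~\ref{embedding}, put it in Hermite form $\left(\begin{smallmatrix} u & h \\ 0 & v \end{smallmatrix}\right)$, and read off the factorization $\left(\begin{smallmatrix} 1 & 0 \\ 0 & v \end{smallmatrix}\right)\left(\begin{smallmatrix} u & h \\ 0 & 1 \end{smallmatrix}\right)$ directly, so that maximality kills $v$ in one stroke; this collapses the paper's claim-plus-induction into a single structural observation and avoids Lemma~\ref{composing transformations} entirely. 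The one thing you are invoking beyond what the paper states is the classification of right-equivalence classes of \emph{composite}-determinant transformations by upper-triangular Hermite representatives with $uv=a_2$ and $0 \leq h < u$ (the paper only quotes Buell's Proposition~7.2 for prime determinant); this is the standard column Hermite normal form over $\Z$, equivalently the enumeration of index-$a_2$ sublattices of $\Z^2$, so it is elementary, but you should say a word about it since it is the load-bearing step. The final reduction to $|k| \leq a_2/2$ is handled exactly as in both your write-up and the paper, by reusing the argument of Proposition~\ref{derived from e} with $b_2 > a_2^2$ in place of $D > p^2$, and the uniqueness of reduced forms upgrades the equivalence to equality with the reduced $f$.
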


\begin{proof}
    Let $b_2 = (\frac{a}{a_2})^2 b$ be maximal such that $f$ is derived from $e = e_{-4b_2}$.
    We know that $f$ can be derived from $e$ by repeatedly applying the formulas from \eqref{derived forms} for each prime factor of $a_2$.
    Claim: only the transformations with $h \neq \infty$ in \eqref{derived forms} are used to produce $f$ from $e$.\\
    In the chain of transformations from $e$ to $f$, suppose we have an intermediate form 
    \[ f_2 \in C(-4a_3^2b_2) = C(-4(\frac{a \cdot a_3}{a_2})^2b) \]
    which is produced from $e$ using the prime factors of $a_3 = p_1 \cdots p_r \mid a_2$, and suppose only transformations of the form
    \[ \begin{pmatrix}
        p_1 & h_1\\
        0 & 1
    \end{pmatrix}
    , \dots,
    \begin{pmatrix}
        p_r & h_r\\
        0 & 1
    \end{pmatrix} \]
    were used up to this point.
    Suppose furthermore that the next prime $p \mid a_2$ in the chain does use $h = \infty$ to produce $f_3$.
    We know from Lemma \ref{composing transformations} that the transformation from $e$ to $f_2$ is of the form \[ \begin{pmatrix}
        a_3 & h\\
        0 & 1
    \end{pmatrix}
    \quad \text{with} \quad  h = \sum_{i=1}^r h_i \prod_{j=1}^{i-1} p_j \]
    Therefore, the transformation from $e$ to $f_3$ can be written as
    \[
    \begin{pmatrix}
        a_3 & h\\
        0 & 1
    \end{pmatrix}
    \begin{pmatrix}
        1 & 0\\
        0 & p
    \end{pmatrix}
    =
    \begin{pmatrix}
        a_3 & hp\\
        0 & p
    \end{pmatrix}
    \]
    However, there is another way to produce $f_3$ from $e$.
    Namely, if we instead start with the prime $p$ and use $h = \infty$ again, then we produce 
    \[ f_2' = (1,0,p^2 b_2) = e_{-4p^2b_2}\]
    from $e$.
    Next, we produce $f_3'$ by using the following transformations for the prime factors of $a_3$:
    \[ \begin{pmatrix}
        p_1 & p \cdot h_1\\
        0 & 1
    \end{pmatrix}
    , \dots,
    \begin{pmatrix}
        p_r & p \cdot h_r\\
        0 & 1
    \end{pmatrix} \]
    where $p \cdot h_i$ can be computed modulo $p_i$ for all $i$.
    Then using Lemma \ref{composing transformations} again, we see that the transformation from $e$ to $f_3'$ is:
    \[
    \begin{pmatrix}
        1 & 0\\
        0 & p
    \end{pmatrix}
    \begin{pmatrix}
        a_3 & hp\\
        0 & 1
    \end{pmatrix}
    =
    \begin{pmatrix}
        a_3 & hp\\
        0 & p
    \end{pmatrix}
    \]
    Hence $f_3 = f_3'$. We now see that $f_3$ is also derived from $e' = e_{-4p^2b_2}$. Therefore, $f$ is also derived from $e'$.
    This contradicts the assumption that $b_2$ was maximal, hence the claim is proven.\\
    To finish the proof of the proposition, we use Lemma \ref{composing transformations} one final time, together with the claim, to see that the transformation from $e = (1,0,b_2)$ to $f$ is of the form
    \[
    \begin{pmatrix}
        a_2 & l\\
        0 & 1
    \end{pmatrix}
    \quad \text{for some} \; \; 0 \leq l \leq a_2-1.
    \]
    We are now in the same situation as Proposition \ref{derived from e}.
    By repeating the steps of that proof we see that $f$ is either equal to 
    \[ (a_2^2, 2a_2l, l^2 + b_2) \quad \text{or to} \quad (a_2^2, 2a_2(l-a_2), (l-a_2)^2 + b_2).  \qedhere \]
\end{proof}

\begin{prop}
    Algorithm \ref{decomposition algorithm 1} can be extended to work for integers $n$ having composite square factors, without increasing the asymptotic complexity.
\end{prop}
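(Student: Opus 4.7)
The plan is to modify Algorithm \ref{decomposition algorithm 1} in a single place: on line 22, I replace the appeal to Proposition \ref{derived from e} by an appeal to Proposition \ref{derived from e composite}. Everything else, including the choice of $r$ and the Schnorr--Lenstra stage 1 analysis, is left untouched.

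For correctness, when the algorithm has produced a form $l \in C(-4nsr^2)$ that is derived from $e \in C(-4bsr^2)$ (by Lemma \ref{r truc}) and satisfies $l \not\sim e$, I apply Proposition \ref{derived from e composite} with the square-free part $bsr^2$ playing the role of ``$b$'' in its statement. The hypothesis $b_2 > a_2^2$ then holds automatically for every divisor $a_2 \mid a$, since
\[ b_2 \;=\; (a/a_2)^2\, bsr^2 \;\geq\; r^2 \;\geq\; n \;\geq\; a^2 \;\geq\; a_2^2, \]
using $r \geq \sqrt{n}$ (line 7). The case $a_2 = 1$ would force $l \sim e$ in $C(-4nsr^2)$, which we excluded, so $a_2 \geq 2$ and the first coefficient of the reduced form of $l$ is a nontrivial square divisor $a_2^2$ of $n$.

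A single success at line 22 need not yield $a^2$ itself, only a divisor $a_2^2$ of $a^2$. After extracting $a_2^2$ I set $n' = n/a_2^2$, which still has square-free part $b$ and whose square part $(a/a_2)^2$ is strictly smaller than $a^2$, and restart the algorithm on $n'$. The recursion terminates after at most $\log_2(a) = \bigo(\ln n)$ iterations. Each iteration costs $\tbigo(\psi_b(n))$ — the expensive parameter $b$ is identical at every recursion level — so the extra $\bigo(\ln n)$ factor is absorbed into $\tbigo$, and the overall expected complexity remains $\tbigo(\psi_b(n))$ as in Theorem \ref{decompostion theorem 1}.

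The main technical obstacle, namely controlling the chain of transformation matrices when $a$ is composite and in particular ruling out the $h = \infty$ transformation at the maximal level $b_2$, is entirely contained in Proposition \ref{derived from e composite}. Once that proposition is granted, the extension amounts to a drop-in replacement plus the trivial size check above confirming that the line-7 choice of $r$ is large enough for every $a_2 \mid a$.
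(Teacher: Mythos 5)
Your extension is correct, but you resolve the ``only a proper divisor $a_2^2$ of $a^2$ was found'' situation by a different mechanism than the paper. You recurse: extract $a_2^2$, pass to $n' = n/a_2^2$, and rerun the whole algorithm, bounding the depth by $\log_2(a) = \bigo(\ln n)$ and absorbing that factor into $\tbigo$. The paper instead exploits the \emph{reason} the overshoot happened: finding only $a_2^2$ means the highly composite exponent $k$ already annihilates (the relevant part of) $C(-4(\tfrac{a}{a_2})^2 bs)$, so a \emph{single} additional run of Lenstra's original Algorithm~\ref{lenstra algorithm} on $b_2 = (\tfrac{a}{a_2})^2 b$ with the \emph{same} $s$ and $k$ completely factors $b_2$; reading off its square part gives $\tfrac{a}{a_2}$ and hence $a$. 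The paper's route costs only one extra $\tbigo(L_b[1/2,1]\ln(b_2))$ iteration with no fresh search for a good multiplier, and yields the full factorization of $b$ and of $a/a_2$ as a byproduct; your route is more generic and self-contained (it needs no claim about what the overshoot implies for $h(-4b_2s)$), at the price of restarting the expected-$\bigo(L_b[1/2,1/2])$ multiplier search at each level — harmless asymptotically, as you argue. Two small points of hygiene: calling $bsr^2$ the ``square-free part'' is a misnomer (it contains the square $r^2$; what matters is only that Proposition~\ref{derived from e composite} is applied with underlying group $C(-4bsr^2)$, exactly as the paper does), and your remark that $a_2=1$ ``is excluded'' should be read as: when $l \sim e_{-4nsr^2}$ the algorithm simply finds nothing and moves to the next $s$, so whenever an $a_2^2$ is actually extracted one has $a_2 \geq 2$ and the recursion makes progress.
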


\begin{proof}
    Until line 23 of Algorithm \ref{decomposition algorithm 1}, we don't have to make any adjustments.
    In line 23, if the $x^2$ coefficient of $l$ is $a^2$, then we can continue as usual.
    But, as we saw in Proposition \ref{derived from e composite}, this coefficient can also be a divisor of $a^2$, say $a_2^2$.
    This happens when our highly composite integer $k$ not just covers the prime factors of $h(-4bs)$, but also those of $h(-4(\frac{a}{a_2})^2bs)$.\\
    To get the full factor $a$ of $n$, we can use Lenstra's original Algorithm \ref{lenstra algorithm}, together with the same $s$ and $k$, to completely factor $b_2 = (\frac{a}{a_2})^2b$.
    By reading off the square part of the factorization of $b_2$, we get $\frac{a}{a_2}$.
    Combining this with the factor $a_2$ we found earlier, we now have computed the square-free decomposition of $n = (a_2 \frac{a}{a_2})^2 b$.
    Note that in this case, the only part of $n$ that we possibly don't have the full factorization of is $a_2$.\\
    We only added a single iteration of Algorithm \ref{lenstra algorithm} to factor $b_2$.
    We know from Theorem \ref{lenstra refrase} that this only takes $\tbigo(L_b[1/2,1] \ln(b_2))$ and not just $\bigo(L_{b_2} [1/2,1])$, since we use the same prime bound $B$ and multiplier $s$ that we also applied in Algorithm \ref{decomposition algorithm 1} with input $n=a^2b$.
    Therefore, the asymptotic complexity of this algorithm is the same as Algorithm \ref{decomposition algorithm 1}.
\end{proof}

\section{Choice of multipliers $s$}
\label{appendix: multipliers}

In Algorithm \ref{decomposition algorithm 1}, the order of the multipliers $s$ that we try is fairly straightforward: start at 1 and increase each round to the next square-free number.
We hope that $h(-4bs) \approx 2\sqrt{bs}/\pi$ is smooth.
So, the smallest values of $s$ should be tried first right?
Well, numerical `evidence' and heuristic arguments suggest that this is not always the case.

One of those arguments is that the number of prime factors of the discriminant strongly influences the number of factors $2$ of the class number.

\begin{prop}
\label{order 2 elements}
    Let $D$ be a negative discriminant, let $r$ be the number of distinct odd primes dividing $D$. Define the number $\mu$ as follows: if $D = 1 \bmod 4$, then $\mu = r$, and if $D = 0 \bmod 4$, then $D = -4m$, where $m > 0$, and $\mu$ is determined by the following table:
    \begin{center}
        \begin{tabular}{ l|c } 
            \multicolumn{1}{c}{$n$} & \multicolumn{1}{c}{$\mu$} \\
            \hline
            $m = 3 \bmod 4$ & $r$ \\ 
            $m = 1,2 \bmod 4$ & $r+1$ \\ 
            $m = 4 \bmod 8$ & $r+1$ \\ 
            $m = 0 \bmod 8$ & $r+2$
        \end{tabular}
    \end{center}
    Then the class group $C(D)$ has exactly $2^{\mu - 1}$ elements of order $\leq 2$. 
\end{prop}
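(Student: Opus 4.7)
The plan is to identify elements of order $\leq 2$ in $C(D)$ with \emph{ambiguous classes} and count them via Gauss's theory of genera. A class $[f] \in C(D)$ has order dividing $2$ precisely when $[f] = [f^{-1}]$, where $(a,b,c)^{-1} = (a,-b,c)$. For any finite abelian group $G$, the structure theorem yields $|G[2]| = [G : G^2]$ (both count the number of even invariant factors). Applied to $G = C(D)$, it therefore suffices to compute the index $[C(D) : C(D)^2]$.

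To evaluate that index, I would invoke the genus characters. For each odd prime $p \mid D$ there is a character $\chi_p \colon C(D) \to \{\pm 1\}$ given by $\chi_p([f]) = \legendre{n}{p}$ for any integer $n$ coprime to $D$ represented by $f$; when $D = -4m$, additional characters at the prime $2$ appear depending on $m \bmod 8$. A careful case split shows that zero extra characters are needed when $m \equiv 3 \pmod 4$, a single extra character (either $n \mapsto (-1)^{(n-1)/2}$ or $n \mapsto (-1)^{(n^2-1)/8}$) when $m \equiv 1,2 \pmod 4$ or $m \equiv 4 \pmod 8$, and two extra characters when $m \equiv 0 \pmod 8$. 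The total number of assigned characters is then exactly the $\mu$ of the statement.

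Finally, I would appeal to Gauss's Hauptsatz: the combined character map $\Phi \colon C(D) \to \{\pm 1\}^\mu$ has kernel exactly $C(D)^2$, and its image is the index-$2$ subgroup cut out by the single universal relation $\prod_i \chi_i = 1$ (which follows from quadratic reciprocity together with the fact that $D$ is a square in $(\Z/D\Z)^\times$). This gives $[C(D) : C(D)^2] = 2^\mu/2 = 2^{\mu-1}$ and concludes the proof. The genuinely delicate step is the casework at the prime $2$ — both the determination of how many extra characters appear in each residue class of $m$ and the verification that $\ker \Phi = C(D)^2$ in the non-fundamental case, where one should reconcile the characters on $C(D)$ with those on an ambient fundamental discriminant via the embedding from Proposition \ref{embedding}. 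Since this analysis is entirely classical (see Cox \cite{cox}, Chapter 3), if space is tight one can legitimately reduce the proof to a citation plus the case table at $p = 2$.
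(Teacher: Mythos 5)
Your proposal is correct in outline, but it takes a genuinely different route from the paper, which simply defers to Proposition 3.11 of Cox \cite{cox}; Cox's own proof there is an elementary count of the ambiguous reduced forms (those with $b=0$, $a=b$, or $a=c$) in terms of factorizations of the discriminant, whereas you count $|C(D)[2]|$ as the index $[C(D):C(D)^2]$ and evaluate that index by genus theory. Your route has the appeal of connecting to machinery this paper actually deploys later --- the assigned characters and the homomorphism $\Phi_D$ of Proposition \ref{character homomorphism} are exactly your $\Phi$ --- but it leans on strictly more theory. Besides the casework at the prime $2$ and the identity $\ker\Phi = C(D)^2$ (the principal genus theorem), which you rightly flag as delicate, you are also silently invoking the \emph{existence of genera}: that $\Phi$ surjects onto the full index-two subgroup $\{\prod_i \chi_i = 1\}$ rather than onto something smaller. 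Reciprocity only gives the containment $\operatorname{im}\Phi \subseteq \{\prod_i\chi_i=1\}$, hence the upper bound $[C(D):C(D)^2]\le 2^{\mu-1}$; the matching lower bound is the deepest ingredient of Gauss's theory (classically proved via ternary forms or Dirichlet's theorem on primes in progressions) and should be cited explicitly, with the additional remark that it holds for non-fundamental discriminants. With that caveat your argument is complete; the paper's citation-only proof buys brevity, while yours buys a conceptual explanation of where the number $2^{\mu-1}$ comes from.
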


\begin{proof}
    See Proposition 3.11 of \cite{cox}.
\end{proof}

If $C(D)$ has $2^{\mu - 1}$ elements of order $\leq 2$, then $C(D)$ is divisible by $2^{\mu - 1}$.
Therefore, if $n = a^2b$ and $D = -4bs$, then we only need that $h(-4bs)/2^{\mu - 1}$ is smooth to find the square-free factorization of $n$.
If we have an $s = 1 \bmod 4$ with $\gcd(s,b) = 1$, then heuristically we might expect that $h(-4bs)$ is more likely to be smooth than $h(-4b)$ if $2^r > \sqrt{s}$, where $r$ is the number of distinct odd prime factors of $s$.
Funnily enough, this suggests that taking $s = 1$ to be your first attempt is actually quite bad, since it has no prime factors!

However, it is clear that this heuristic argument doesn't give the complete picture. For example, if $C(-4b)$ contains a group isomorphic to $\Z/4\Z$, then it is possible that $C(-4bs)$ no longer has such a subgroup, but instead contains the group $\Z/2\Z \times \Z/2\Z$.
In this case, $h(-4bs)$ has the same number of factors 2 as $h(-4b)$. So, from that perspective, the probability that it is smooth has not increased.
But, as we will see in the upcoming tables, it seems that having a lot of divisors in your multiplier $s$ is generally good.

In both tables below we constructed $40.000$ integers $n$ of the form $n = p^2q$, where $p$ and $q$ are primes which are both roughly $10^{10}$.
In Table \ref{table: q = 1 mod 4} we only took primes $q = 1 \bmod 4$, and in Table \ref{table: q = 3 mod 4} we only took primes $q = 3 \bmod 4$.
Using our complete algorithm, we tested each square-free $s \leq 30$ to see if a factorization was found.
For $s = 1$ we constructed an additional 360.000 integers $n$, for a total of $400.000$ integers to factor, to ensure the accuracy of the probability estimation for $s = 1$.

The column $s$ denotes the multiplier that was used. The column `prob' is the observed probability that the factorization was successful with the given $s$. The final column shows the ratio of the success probability of the given $s$ to the success probability for $s=1$.

\vspace{1.3 em}

\begin{minipage}[c]{0.5\textwidth}
\centering
\begin{tabular}{c|c|c}
$s$ & prob & ratio to $s = 1$\\
\hline
1 & 63.63\% & 1.000  \\
2 & 59.92\% & 0.942  \\
3 & 63.77\% & 1.002  \\
5 & 66.66\% & 1.048  \\
6 & 58.44\% & 0.919  \\
7 & 59.13\% & 0.929  \\
10 & 55.15\% & 0.867  \\
11 & 56.50\% & 0.888  \\
13 & 60.90\% & 0.957  \\
14 & 62.90\% & 0.989  \\
15 & 59.79\% & 0.940  \\
17 & 62.50\% & 0.982  \\
19 & 53.43\% & 0.840  \\
21 & 61.86\% & 0.972  \\
22 & 50.67\% & 0.796  \\
23 & 52.34\% & 0.823  \\
26 & 49.27\% & 0.774  \\
29 & 56.93\% & 0.895  \\
30 & 64.17\% & 1.009
\end{tabular}
\captionof{table}{\\ Multipliers for $q = 1 \bmod 4$}
\label{table: q = 1 mod 4}
\end{minipage}
\begin{minipage}[c]{0.5\textwidth}
\centering
\begin{tabular}{c|c|c}
$s$ & prob & ratio to $s = 1$\\
\hline
1 & 54.75\% & 1.000  \\
2 & 59.49\% & 1.087  \\
3 & 62.36\% & 1.139  \\
5 & 60.65\% & 1.108  \\
6 & 65.23\% & 1.192  \\
7 & 64.35\% & 1.175  \\
10 & 55.16\% & 1.008  \\
11 & 54.93\% & 1.003  \\
13 & 55.72\% & 1.018  \\
14 & 60.41\% & 1.103  \\
15 & 66.54\% & 1.215  \\
17 & 54.12\% & 0.989  \\
19 & 51.11\% & 0.934  \\
21 & 64.99\% & 1.187  \\
22 & 58.45\% & 1.068  \\
23 & 58.05\% & 1.060  \\
26 & 50.07\% & 0.915  \\
29 & 51.58\% & 0.942  \\
30 & 62.86\% & 1.148
\end{tabular}
\captionof{table}{\\ Multipliers for $q = 3 \bmod 4$}
\label{table: q = 3 mod 4}
\end{minipage}

\vspace{1.3 em}

We see that when $q = 1 \bmod 4$, then $s=1$ is not too bad, but when $q = 3 \bmod 4$, it does not seem to be the best starting multiplier.
As we would expect from Proposition \ref{order 2 elements}, the success probability is increased when $s$ has many prime factors and when $qs = 1 \bmod 4$.
In Table \ref{table: q = 1 mod 4}, we see that $s=5$ performs better than $s=1$, but our heuristic only predicted this if $\sqrt{s} > 2^r$, which is not the case here.
Therefore, it might be extra important that $s$ has many prime factors.
For more on the two-part of the class group, see \cite{bosma_stevenhagen}.

Another way to improve the probability that $h(-4bs)$ is smooth is by taking multipliers $s$ such that $h(-4bs)$ is smaller than average.
We can try to accomplish this by using the \textit{class number formula} \cite{shanks_class_number_formula}.

\begin{theorem}
\label{class number formula}
    Let $D$ be a negative discriminant. Then
    \[ h(D) = \frac{\sqrt{-D}}{\pi} \prod_{p} \frac{p}{p-\legendre{D}{p}} \]
    where the infinite product runs over all primes $p$ and where $\legendre{D}{p}$ is the Kronecker symbol.
\end{theorem}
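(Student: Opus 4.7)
The plan is to deduce the formula from Dirichlet's analytic class number formula for imaginary quadratic fields, combined with Corollary \ref{composite square order} to handle non-fundamental discriminants. For a fundamental negative discriminant $D_0 < -4$, Dirichlet's formula reads
\[ h(D_0) = \frac{\sqrt{-D_0}}{\pi}\, L(1, \chi_{D_0}), \qquad L(s, \chi_{D_0}) := \sum_{n=1}^{\infty} \frac{\chi_{D_0}(n)}{n^s}, \]
where $\chi_{D_0}(n) = \legendre{D_0}{n}$ is the Kronecker character. Complete multiplicativity of $\chi_{D_0}$ gives the Euler product $L(s, \chi_{D_0}) = \prod_p (1 - \chi_{D_0}(p) p^{-s})^{-1}$ for $\Re(s) > 1$; passing to $s \to 1^{+}$ yields
\[ h(D_0) = \frac{\sqrt{-D_0}}{\pi} \prod_p \frac{p}{p - \legendre{D_0}{p}}. \]

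For a general negative discriminant $D < -4$, write $D = D_0 f^2$ with $D_0$ fundamental and $f = \prod_p p^{e_p}$. Corollary \ref{composite square order} (together with the known analogue at the prime $2$) gives
\[ h(D) = h(D_0) \prod_{p \mid f} p^{\,e_p - 1}\bigl(p - \legendre{D_0}{p}\bigr). \]
On the Euler-product side, multiplicativity of the Kronecker symbol in the top argument gives $\legendre{D}{p} = \legendre{D_0}{p}$ for $p \nmid f$ and $\legendre{D}{p} = 0$ for $p \mid f$, so
\[ \frac{\sqrt{-D}}{\pi} \prod_p \frac{p}{p - \legendre{D}{p}} = \frac{f\sqrt{-D_0}}{\pi} \prod_p \frac{p}{p - \legendre{D_0}{p}} \cdot \prod_{p \mid f}\frac{p - \legendre{D_0}{p}}{p} = f \cdot h(D_0) \prod_{p \mid f}\frac{p - \legendre{D_0}{p}}{p}. \]
Distributing $f = \prod_p p^{e_p}$ into the final product turns this into $h(D_0) \prod_{p \mid f} p^{\,e_p - 1}(p - \legendre{D_0}{p}) = h(D)$, closing the argument.

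The hard part is justifying Step 1 rigorously: the Euler product $\prod_p p/(p - \chi_{D_0}(p))$ converges only conditionally at $s = 1$, whereas absolute convergence is available only for $\Re(s) > 1$. Identifying the value of this conditional product with $L(1, \chi_{D_0})$ requires either an Abel-summation argument over primes ordered by size, or a Mertens-style estimate combined with the known analytic continuation of $L(s, \chi_{D_0})$ to $s = 1$. Beyond that, one has to dispose of the small-discriminant exceptions: for $D_0 \in \{-3, -4\}$ the number of roots of unity $w$ is larger than $2$, so the analytic formula picks up a factor $w/2$, and Corollary \ref{composite square order} must be supplemented with its $p = 2$ version to cover even conductors $f$. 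Once these bookkeeping issues are settled, the identity in the theorem is essentially a rewriting of Dirichlet's formula.
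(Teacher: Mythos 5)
The paper does not prove this statement at all: it is quoted as a known result with a citation to \cite{shanks_class_number_formula}, so your derivation supplies an argument the paper omits rather than paralleling one. Your route is the standard one and the arithmetic is sound: Dirichlet's formula $h(D_0)=\tfrac{w\sqrt{-D_0}}{2\pi}L(1,\chi_{D_0})$ for the fundamental part, the Euler product for $L(1,\chi_{D_0})$, and the conductor formula $h(D_0f^2)=h(D_0)\prod_{p\mid f}p^{e_p-1}(p-\legendre{D_0}{p})$ (the content of Corollary \ref{composite square order}, extended to $p=2$), with the bookkeeping $\sqrt{-D}=f\sqrt{-D_0}$ and $\legendre{D}{p}=0$ for $p\mid f$ cancelling exactly as you compute. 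You also correctly isolate the two places where real work remains: (i) the identification of the conditionally convergent product $\prod_p p/(p-\chi_{D_0}(p))$, taken over primes in increasing order, with $L(1,\chi_{D_0})$ -- this is the genuine analytic content and needs Abel summation or a Mertens-type estimate, not just ``letting $s\to 1^+$'' in the region of absolute convergence; and (ii) the exceptional discriminants $D=-3,-4$, where the unit count $w>2$ means the formula as printed in the theorem is actually off by a factor $w/2$ (e.g.\ the right-hand side gives $1/3$ for $D=-3$), though for non-fundamental $D$ with fundamental part $-3$ or $-4$ the unit index in the conductor formula cancels this and the stated identity holds again. Since the theorem is used in the paper only heuristically, to bias the choice of multipliers $s$ toward small class numbers, neither caveat affects anything downstream; but if you want a self-contained proof, (i) is the step you must actually carry out rather than gesture at.
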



We can use this formula as follows. Given an integer $n = a^2b$, we can very conveniently compute $\legendre{-4n}{p_i} = \legendre{-4b}{p_i}$ for some small primes $p_i$.
If we now choose multipliers $s$ such that for most of the symbols:
\begin{equation}
\label{symbol relation}
    \legendre{s}{p_i} = - \legendre{-4b}{p_i}
\end{equation}
then we expect $h(-4bs)$ to be relatively small on average.

Given a set of primes $p_i$, it is easy to find multipliers $s$ such that \eqref{symbol relation} holds for all $i$ by applying the Chinese remainder theorem.
But, you have to find the balance between the size of $s$ and how much you gain from using Theorem \ref{class number formula}.
This seems to be a non-trivial problem, since it also depends on the number of prime factors of $s$, as we saw in Tables \ref{table: q = 1 mod 4}, \ref{table: q = 3 mod 4}.
We won't solve this problem in this article, but we do provide some more numerical `evidence' that suggests that this method helps in finding the square-free decomposition of $n$ faster.

The next two tables have the same structure as Tables \ref{table: q = 1 mod 4}, \ref{table: q = 3 mod 4}.
We also generated the same number of integers $n = p^2q$.
However, the condition on the primes $q$ that we consider is different.
In Table \ref{table: positive symbols} we only take primes $q$ that satisfy: $\legendre{-q}{3} = \legendre{-q}{5} = \legendre{-q}{7} = 1$.
In Table \ref{table: negative symbols} we take primes $q$ with $\legendre{-q}{3} = \legendre{-q}{5} = \legendre{-q}{7} = -1$ instead.
The new column `symbols $\legendre{-qs}{r}$' denotes the values of the three Legendre symbols for the primes $r = 3, 5, 7$.

\begin{table}[H]
\centering
    \begin{tabular}{c|c|c|c}
    $s$ & prob & ratio to $s = 1$ & symbols $\legendre{-qs}{r}$\\
    \hline
    1 & 51.58 \% & 1.000 & (1, 1, 1)  \\
    2 & 64.28 \% & 1.246 & (-1, -1, 1)  \\
    3 & 65.35 \% & 1.267 & (0, -1, -1)  \\
    5 & 74.01 \% & 1.435 & (-1, 0, -1)  \\
    6 & 62.10 \% & 1.204 & (0, 1, -1)  \\
    7 & 54.75 \% & 1.062 & (1, -1, 0)  \\
    10 & 56.69 \% & 1.099 & (1, 0, -1)  \\
    11 & 55.59 \% & 1.078 & (-1, 1, 1)  \\
    13 & 58.91 \% & 1.142 & (1, -1, -1)  \\
    14 & 58.66 \% & 1.137 & (-1, 1, 0)  \\
    15 & 58.62 \% & 1.136 & (0, 0, 1)  \\
    17 & 65.69 \% & 1.273 & (-1, -1, -1)  \\
    19 & 48.23 \% & 0.935 & (1, 1, -1)  \\
    21 & 60.16 \% & 1.166 & (0, 1, 0)  \\
    22 & 51.02 \% & 0.989 & (1, -1, 1)  \\
    23 & 59.64 \% & 1.156 & (-1, -1, 1)  \\
    26 & 53.46 \% & 1.036 & (-1, 1, -1)  \\
    29 & 54.09 \% & 1.049 & (-1, 1, 1)  \\
    30 & 59.71 \% & 1.157 & (0, 0, 1)
    \end{tabular}
    \caption{Effect of Legendre symbols on multipliers, part 1}
    \label{table: positive symbols}
\end{table}

\begin{table}[H]
\centering
    \begin{tabular}{c|c|c|c}
    $s$ & prob & ratio to $s = 1$ & symbols $\legendre{-qs}{r}$\\
    \hline
    1 & 66.68 \% & 1.000 & (-1, -1, -1)  \\
    2 & 55.27 \% & 0.829 & (1, 1, -1)  \\
    3 & 61.18 \% & 0.918 & (0, 1, 1)  \\
    5 & 53.05 \% & 0.796 & (1, 0, 1)  \\
    6 & 63.44 \% & 0.951 & (0, -1, 1)  \\
    7 & 68.29 \% & 1.024 & (-1, 1, 0)  \\
    10 & 53.63 \% & 0.804 & (-1, 0, 1)  \\
    11 & 55.58 \% & 0.834 & (1, -1, -1)  \\
    13 & 58.19 \% & 0.873 & (-1, 1, 1)  \\
    14 & 65.59 \% & 0.984 & (1, -1, 0)  \\
    15 & 61.98 \% & 0.930 & (0, 0, -1)  \\
    17 & 50.75 \% & 0.761 & (1, 1, 1)  \\
    19 & 57.06 \% & 0.856 & (-1, -1, 1)  \\
    21 & 71.64 \% & 1.074 & (0, -1, 0)  \\
    22 & 57.56 \% & 0.863 & (-1, 1, -1)  \\
    23 & 50.82 \% & 0.762 & (1, 1, -1)  \\
    26 & 46.77 \% & 0.701 & (1, -1, 1)  \\
    29 & 53.87 \% & 0.808 & (1, -1, -1)  \\
    30 & 62.84 \% & 0.942 & (0, 0, -1)
    \end{tabular}
    \caption{Effect of Legendre symbols on multipliers, part 2}
    \label{table: negative symbols}
\end{table}

It indeed seems that negative symbols imply a increased probability of finding the square-free factorization.
Also as expected, the symbols for the smallest primes $r$ are the most important, since they have the biggest impact in the formula of Theorem \ref{class number formula}.
We also again see the effect of the number of prime factors of $s$.
Whether the same patterns still hold when the size of $q$ increases has to be analyzed more thoroughly another time.

One might have noticed that we have skipped the very small prime $r = 2$. This prime (as usual) behaves a bit different than the other primes.
The even prime is also quite important, since it has the largest impact in the class number formula.
When $q = 3 \bmod 4$, it is generally favorable to have $\legendre{-q}{2} = -1$ for finding a factorization, as one might expect.
However, when $q = 1 \bmod 4$, then $-q$ is no longer a discriminant, so have to multiply it by $4$ to make it valid.
This makes $\legendre{-4q}{2} = 0$, so one might think that the value of $\legendre{-q}{2}$ has no impact on the probability of factorization.
However, in practice it seems that $\legendre{-q}{2} = -1$ is actually \emph{worse} in this case than $+1$.
Even though the number of forms of order $2$ seem roughly equal in both cases, in the former case there appears to be quite often a form of higher $2$-power order, which is beneficial for the factorization.

The idea of carefully choosing the groups that you try is again quite similar to the ECM. For example, special curves are chosen that are known to be divisible by 12 or 16 \cite{ecm_suitable_curves}, improving the probability of finding a factor.
The idea to look for small class groups does not have a direct elliptic equivalent though, since the order of an elliptic curve modulo a prime $p$ always lies in the limited (compared to class groups) interval of 
\[ \left[ p+1 - 2\sqrt{p}, \text{ } p+1 + 2\sqrt{p} \right]\] \cite{crandallprimenumbers}, Theorem 7.3.1.

In conclusion, there is very likely a way to choose your multipliers $s$ in a different order than in Algorithm \ref{decomposition algorithm 1} that performs better on average.
But, it is also clear that it requires further analysis to determine such a strategy.
Not to forget that this strategy might also change when the size of the square-free part of $n$ gets larger.
Very interesting nevertheless!

\section{Searching for a better stage 2}
\label{appendix: stage 2}

In this appendix we will discuss some methods that are used in other algebraic-group factorization algorithms where a better stage 2 is known. We will try to make one these methods work in our own algorithm.
The first one of these methods can be found in the article from Schnorr and Lenstra \cite{schnorrlenstra}.

\subsection{Pollard rho}
In the second stage of their factoring algorithm, Schnorr and Lenstra employ a Pollard rho type of method.
Let us briefly discuss this method and see if we can use it in our own algorithm.
In Algorithm \ref{lenstra algorithm}, we compute a form $g$ that we hope will be the identity element. If not, suppose that it has prime order $q$, where $q \in [B,B_2]$.

Define a random function $\rho : \generated{g} \longrightarrow \generated{g}$, where $\generated{g}$ is the subgroup of order $q$ in $C(-4ns)$ containing the powers of $g$.
In their article, Schnorr and Lenstra describe a few of those functions $\rho$. They all take a form $h \in \generated{g}$ and compose it with a power of $g$, depending on what the coefficients of the reduced form of $h$ are.
For example, given a reduced form $h = (a,b,c)$, define

\begin{equation}
\label{random function on forms}
    \rho(h) = \begin{cases} 
    h^2 & \text{if } a \leq \sqrt{\Delta(h)}/4,\\
    h \cdot g & \text{otherwise}.
   \end{cases}
\end{equation}

There are some more complicated mappings $\rho$ in the article of Schnorr and Lenstra that might perform better in practice (i.e. they are more random), but the idea is the same.

Given the initial form $g$, we can repeatedly apply $\rho$ to it to create a chain: $g, \rho(g), \rho(\rho(g)), \dots$.
We know from \cite{crandallprimenumbers} Section 5.2.1, that if $\rho$ is sufficiently random, then we can expect that this chain returns to a previously encountered form after $\bigo(\sqrt{q}) \subseteq \bigo(\sqrt{B_2})$ steps, after which it repeats that cycle.
Using a cycle detection algorithm, we can find such a collision in $\bigo(\sqrt{B_2})$ applications of $\rho$.
Note that the chain does not necessarily have to return to $g$.

By carefully storing the exponent of the power of $g$ that was multiplied to the initial form, the collision provides a relation of the form $g^x = g^y$, where $x \neq y$.
Then $q \mid (x-y)$, so the ambiguous forms can be constructed and $n$ can be factored.

To compute one step in the cycle, we need to compute one composition in the class group. Therefore, if we take $B_2 = B^2$, then stage 2 will take $\bigo(B)$ compositions, just like stage 1.
Lenstra \cite{schnorrlenstra} page 300, mentions that on average this provides a speedup of roughly a factor $\ln(n)/\ln\ln(n)$, compared to only using stage 1.
Summarizing, we have the following proposition.

\begin{prop}
\label{lenstra stage 2 speed-up}
    Algorithm \ref{lenstra algorithm} can be extended using a random map in $C(-4ns)$ to factor $n$, without increasing the asymptotic run time per $s$ that we try.
    It will factor $n$ if the form produced by stage 1 has order less than $B^2$.
    The number of class groups that have to be tried to factor $n$ this way is reduced by roughly a factor $\ln(n)/\ln\ln(n)$. \qed
\end{prop}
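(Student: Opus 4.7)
The plan is to make the discussion preceding the statement rigorous, matching the three clauses of the proposition. After Algorithm \ref{lenstra algorithm} produces $g = f^k$, I would iterate the random map $\rho$ of \eqref{random function on forms} starting from $g$, while maintaining for each iterate $h_i = \rho^i(g)$ a bookkeeping exponent $a_i$ with $h_i \sim g^{a_i}$; this is trivial to update because each branch of $\rho$ either squares $h_i$ or composes it with $g$. A cycle-detection subroutine (Floyd, Brent, or distinguished points) applied to $(h_i)$ then produces indices $i < j$ with $h_i \sim h_j$, hence a non-trivial relation $g^{a_j - a_i} \sim e$. From this an ambiguous form is extracted by stripping factors of $2$ from $a_j - a_i$ and rerunning the squaring loop of lines 13--17 of Algorithm \ref{lenstra algorithm}, after which $n$ is factored by reading off the coefficients.

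For the cost per multiplier $s$, I would invoke the standard birthday heuristic (see \cite{crandallprimenumbers}, Section 5.2.1): modelling $\rho$ as a random self-map on the cyclic subgroup $\generated{g}$, a collision appears after $\bigo(\sqrt{|\generated{g}|})$ iterations. If the order of $g$ is less than $B^2$, this is at most $\bigo(B)$ iterations, each consisting of a single class-group composition. Since stage 1 itself uses $\bigo(B)$ compositions per $s$ by Theorem \ref{lenstra refrase}, the asymptotic runtime per $s$ is unchanged, and whenever the stage-1 form has order below $B^2$ a collision is guaranteed within the budget; this proves the first two sentences.

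For the reduction in the number of multipliers tried, I would model $h(-4ns)$ as a random integer of size $\sqrt{n}$ via Assumption \ref{updated assumptions}(a). Stage 1 alone succeeds exactly when this integer is $B$-smooth, whereas stage 1 combined with the above stage 2 succeeds whenever it is $B$-smooth apart from at most one prime factor in $(B, B^2]$, i.e.\ semi-smooth with parameters $(B, B^2)$. A Dickman--de Bruijn-type computation, due to Schnorr and Lenstra in \cite{schnorrlenstra} (p.~300), gives that the ratio of the semi-smoothness probability to the plain smoothness probability is $\Theta(\ln n / \ln\ln n)$; hence the expected number of multipliers drops by the same factor, which is the third sentence.

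The main obstacle is entirely heuristic: the cycle-length bound assumes $\rho$ mixes as a random walk on $\generated{g}$, and the probability ratio in the last step depends on the extension of Dickman asymptotics to semi-smooth counts. Both assumptions are already absorbed into Assumption \ref{updated assumptions} and the prior analysis of \cite{schnorrlenstra}, so my plan is to cite them rather than to reprove them; a secondary technical point is to bookkeep the integers $a_i$ cheaply enough that they do not dominate the class-group compositions, which is handled by the standard choice of $\rho$ favouring compositions over squarings once $a_i$ grows large.
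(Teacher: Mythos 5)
Your proposal matches the paper's own argument essentially step for step: the same Pollard-rho map with exponent bookkeeping, the same $\bigo(\sqrt{B_2}) = \bigo(B)$ cycle-detection bound yielding a relation $g^{x} \sim g^{y}$ from which ambiguous forms are built, and the same appeal to Schnorr--Lenstra (p.~300) for the $\ln(n)/\ln\ln(n)$ reduction in the number of multipliers. The paper presents this as the informal discussion preceding the proposition (which is stated with \qed and no separate proof), so your write-up is, if anything, slightly more explicit about the semi-smoothness interpretation of the third clause.
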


Algorithm \ref{decomposition algorithm 1} also takes place in class groups, so surely we can just use the same method to get this very nice speedup? Well, not quite.
The difference is that we don't seek a collision in the big group $C(-4ns)$, but in underlying group $C(-4bs)$ instead.
We need a random function $\rho$ that has the following property: given $f_1,f_2 \in C(-4ns)$,
\begin{equation}
\label{modulo eis}
    \pi(f_1) = \pi(f_2) \implies \pi(\rho(f_1)), \pi(\rho(f_2))
\end{equation}
where $\pi$ is the projection map $\pi : C(-4ns) \relbar\joinrel\twoheadrightarrow C(-4bs)$ from Section \ref{ss: non fun}.
This property is necessary because otherwise the cycle created by $\rho$ won't repeat fast enough in the underlying group $C(-4bs)$.
The random function in \eqref{random function on forms} does not have this property, so we can't use it. But, that doesn't mean of course that there is not another map that does tick all of the boxes.

In 1996, Okamoto and Peralta \cite{peralta_ecm_p2q} introduced a variant of the elliptic curve method (ECM) that is specialized for numbers of the form $p^2q$, where $p,q$ are primes and $q$ is relatively small.
This variant works the same as the ECM in stage 1: take some point $P$ on an elliptic curve modulo $n$ and compute $Q = [k]P$ for some large smooth $k$ and hope to find $p$ or $q$.
For stage 2, a random map is defined using the Jacobi Symbol. Note that for every $a \ne 0 \bmod p$, we have

\begin{equation}
\label{legendre mod q}
    \legendre{a}{n} = \legendre{a}{q}.
\end{equation}
From this, we can define a random map $\rho : \generated{Q} \longrightarrow \generated{Q}$ as follows:

\begin{equation}
\label{random function on points}
    \rho(R) = \begin{cases} 
    [2]R & \text{if } \legendre{R_x}{n} = 1,\\
    R + Q & \text{otherwise}.
   \end{cases}
\end{equation}

\noindent Because of \eqref{legendre mod q}, this map satisfies the elliptic equivalent of property \eqref{modulo eis}.
This means that if the order of $Q$ in $E(\F_q)$ is $r$, then the cycle $Q, \rho(Q), \rho(\rho(Q)), \dots$ will repeat in $E(\F_q)$ after $\bigo(\sqrt{r})$ steps.
Hence $B_2 = B^2$ can also be chosen in this algorithm.
If $r < B^2$ then the factor $q$ will most likely be obtained.

This approach seems very promising at first for our algorithm, because we can do something similar in class groups using \textit{assigned characters}.

\begin{definition}
    Let $D$ be a negative discriminant and let $p_1, \dots, p_r$ be the distinct odd prime factors of $D$. Then define:
    \begin{align*}
        &\chi_{p_i}(a) = \legendre{a}{p_i}, &&\text{ for $a$ prime to $p_i$, for $i = 1, \dots, r$}\\
        &\delta(a) = \legendre{-1}{a}, &&\text{ for $a$ odd}\\
        &\epsilon(a) = \legendre{2}{a}, &&\text{ for $a$ odd}
    \end{align*}
     The functions $\chi_1, \dots, \chi_r$ are always part of the assigned characters. If $D = 0 \bmod 4$, then there can be additional ones. Write $D = -4n$, then:
    \begin{center}
        \begin{tabular}{ c|c } 
            $n$ & \textup{additional assigned characters} \\
            \hline
            $n = 3 \bmod 4$ & none \\ 
            $n = 1 \bmod 4$ & $\delta$ \\ 
            $n = 2 \bmod 8$ & $\delta \cdot \epsilon$ \\ 
            $n = 6 \bmod 8$ & $\epsilon$ \\ 
            $n = 4 \bmod 8$ & $\delta$ \\ 
            $n = 0 \bmod 8$ & $\delta, \epsilon$ \\ 
        \end{tabular}
    \end{center}
    
\end{definition}

We will state two important properties of these assigned characters. These properties provide a rich structure, but at the same time they will block us from obtaining a better stage 2.

\begin{prop}
\label{character homomorphism}
    Let $D$ be a negative discriminant with assigned characters $\Psi_1, \dots, \Psi_{\mu}$, where $\mu$ is the same as in Proposition \ref{order 2 elements}.
    Let $f \in C(D)$ and suppose $f$ represents $a$ with $\gcd(D,a) = 1$. Define
    \begin{align*}
        \Phi_D : C(D&) \longrightarrow \{-1,1\}^{\mu} \\
        [f]& \longmapsto (\Psi_1(a), \dots, \Psi_{\mu}(a)).
    \end{align*}
    Then $\Phi_D$ is a well-defined homomorphism. 
\end{prop}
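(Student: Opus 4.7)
The plan is to verify three things in order: that $\Phi_D$ is defined on every class, that its value is independent both of the representative of $[f]$ and of the chosen integer $a$ represented, and that Dirichlet composition is respected. Independence of the representative of $[f]$ is immediate, since equivalence acts on $f$ via a matrix $A \in \Gamma$ which merely permutes the set of represented integers; and existence of at least one $a$ coprime to $D$ represented by any primitive $f$ follows from the classical fact that such an $f$ properly represents infinitely many integers, combined with a short sieve avoiding the finitely many primes dividing $D$.

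The crux is showing that if $f$ represents $a$ and $a'$ both coprime to $D$, then $\Psi_i(a) = \Psi_i(a')$ for every assigned character. My plan here is to invoke the standard genus-theoretic result (Cox, Chapter 3) that the values in $(\Z/D\Z)^*$ properly represented by a primitive form of discriminant $D$ form a single coset of the subgroup $S \subset (\Z/D\Z)^*$ of values represented by the principal form $e_D$, and then combine this with the direct verification that each $\Psi_i$ is identically $+1$ on $S$. For the $\chi_{p_i}$ this amounts to the fact that $e_D$ represents squares mod $p_i$; for $\delta$ and $\epsilon$ it is a small case analysis matching the table in the definition against the residues mod $8$ that $e_D$ represents. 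Together these force each $\Psi_i$ to be constant on each coset of $S$, and in particular on the set of coprime-to-$D$ values represented by any given $f$.

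The homomorphism property then falls out of the formula for Dirichlet composition. After replacing $f$ and $g$ in their classes by equivalent forms with first coefficients $a_1, a_2$ coprime to each other and to $D$ (which is possible by the abundance step above together with a linear change of variables bringing a represented value to the first coefficient), the composition formula $f \cdot g = (a_1 a_2, B, (B^2-D)/(4 a_1 a_2))$ exhibits $f \cdot g$ as representing $a_1 a_2$ via $(x,y) = (1,0)$. Multiplicativity of the Kronecker symbol then gives $\Psi_i(a_1 a_2) = \Psi_i(a_1)\Psi_i(a_2)$, which is exactly componentwise multiplicativity of $\Phi_D$.

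I expect the main obstacle to be the coset statement in the well-definedness step: it packages quadratic reciprocity together with a case analysis modulo $8$, and it is really the heart of classical genus theory. Fortunately this is exactly the content covered in Cox, so citing that reference allows us to treat the coset statement as a black box and keep the proof short.
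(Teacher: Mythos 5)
Your argument is correct and is essentially the paper's proof unpacked: the paper simply cites Lemmas 3.13 and 3.17 of Cox, and the three facts you assemble (constancy of the assigned characters on the coset of represented values, their triviality on the values of the principal form, and multiplicativity via Dirichlet composition with coprime first coefficients) are exactly the content of those two lemmas, which you also ultimately source from Cox. The only point worth tightening is the passage from \emph{properly} represented values (where the coset statement applies) to arbitrary represented values, which is handled by noting that the characters are trivial on squares.
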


\begin{proof}
    This follows from Lemma 3.13 and Lemma 3.17 from \cite{cox}.
    Note that our $\Phi_D$ is the composition (of maps) of $\Phi$ with $\Psi$ in the text of Cox.
\end{proof}

The next proposition states an important property of $\Phi$ in the case of non-fun discriminants.

\begin{prop}
\label{characters derived}
    Let $D$ be a negative discriminant with odd prime factors $p_1, \dots , p_t$ and let $r>0$ be an integer. Suppose $f_1 \in C(Dr^2)$ is derived from $f_2 \in C(D)$. Then $\Phi_{Dr^2}(f_1)$ is equal to $\Phi_D(f_2)$ for the assigned characters that they have in common.
\end{prop}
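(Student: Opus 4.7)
The plan is to reduce the claim to a single observation: a value represented by $f_1$ is also represented by $f_2$, and an assigned character common to both discriminants is evaluated by the \emph{same} formula on such a value, so the two homomorphisms must agree on it.

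First I would invoke the fact that a primitive binary quadratic form represents infinitely many integers coprime to any prescribed integer (this is standard; see Lemma 2.25 of \cite{cox}). Applying this to $f_1$ and the integer $Dr^2$, pick $(x_0,y_0) \in \Z^2$ with
\[ a := f_1(x_0,y_0), \qquad \gcd(a, Dr^2) = 1. \]
In particular $\gcd(a,D) = 1$, so $a$ is a legitimate value at which to evaluate $\Phi_D(f_2)$ by Proposition \ref{character homomorphism}. Since $f_1$ is derived from $f_2$, there is a transformation matrix $A$ with $\det(A) = r$ such that $f_1(x,y) = f_2(A\cdot(x,y)^T)$; hence $f_2$ also represents $a$, via the integer vector $A\cdot(x_0,y_0)^T$.

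Next I would read off both homomorphism values. By Proposition \ref{character homomorphism}, $\Phi_{Dr^2}(f_1)$ is the tuple whose entries are the assigned characters for $Dr^2$ evaluated at $a$, and $\Phi_D(f_2)$ is the tuple whose entries are the assigned characters for $D$ evaluated at $a$. The characters common to the two assigned systems fall into two types: the characters $\chi_{p_i}(\cdot) = \legendre{\cdot}{p_i}$ for odd primes $p_i \mid D$ (these automatically appear in the assigned system for $Dr^2$ since $p_i \mid Dr^2$ as well), and any of the $2$-adic characters $\delta,\epsilon$ that happen to be listed for both discriminants (this can only occur when the $2$-adic structure of $D$ and $Dr^2$ agrees). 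In every case, the definition of the character is a formula in $a$ alone and does not reference the discriminant. Therefore each common entry of $\Phi_{Dr^2}(f_1)$ equals the corresponding entry of $\Phi_D(f_2)$, which is what we needed.

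The only step requiring any care is the existence of the value $a$ coprime to $Dr^2$, and that is a standard fact about primitive forms; everything else is bookkeeping. The reason there is no obstruction at all is that Proposition \ref{character homomorphism} guarantees the result is independent of the chosen representation, so it suffices to exhibit a \emph{single} integer at which both sides can be computed simultaneously.
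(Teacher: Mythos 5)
Your proof is correct and follows essentially the same route as the paper: represent a value $a$ coprime to the discriminant by $f_1$, observe that $f_2$ represents the same $a$ via the derivation, and conclude by well-definedness of $\Phi_D$ and $\Phi_{Dr^2}$ together with the fact that each common character is a formula in $a$ alone. You merely spell out the existence of such an $a$ (Cox, Lemma 2.25) and the case analysis of the common characters, which the paper leaves implicit.
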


\begin{proof}
    Suppose $f_1$ represents $a \in (\Z/D\Z)^*$. Then from the formulas in \eqref{derived forms}, we see that $f_2$ also represents $a$. The result now follows from the fact that $\Phi_D$ and $\Phi_{Dr^2}$ are well-defined.
\end{proof}



In the setting of Algorithm \ref{decomposition algorithm 1}, we have $n = a^2b$ and we work in the class group $C(-4nsr^2)$.
We can compute $\chi_n(c) = \legendre{c}{n}$, where
\[ \chi_n(c) = \chi_b(c) \quad \textit{if } \gcd(c,a) = 1. \]

We can now try to use the idea from \eqref{random function on points} for our stage 2. Let $g \in C(-4nsr^2)$ be the form that we will use for stage 2.
Define $\rho : \generated{g} \longrightarrow \generated{g}$ as:
\begin{equation}
\label{random function on forms 2}
    \rho(h) = \begin{cases} 
    h^2 & \text{if } \chi_n(h) = 1,\\
    h \cdot g & \text{otherwise}.
   \end{cases}
\end{equation}

Here $\chi_n(h)$ means $\chi_n(c)$, where $h$ represents $c \in (\Z/D\Z)^*$.
Because of $\chi_n(c) = \chi_b(c)$ and Propositions \ref{derived behaves well} and \ref{characters derived}, the map $\rho$ very nicely meets the requirement \eqref{modulo eis}.
However, because of Proposition \ref{character homomorphism}, the map is no longer random, since
\[ \chi_n(h^2) = 1 \text{ and } \chi_n(h \cdot g) = \chi_n(h) \cdot \chi_n(g) \]
This makes the `random' map $\rho$ very predictable.
Therefore, $\rho$ won't become periodic quickly enough in $C(-4bs)$.
Every variation of the map $\rho$ that depends on the function $\chi_n$ and which uses the standard composition of forms runs into this problem. 
This problem doesn't exist in \eqref{random function on points}, because there doesn't seem to be an obvious relation between $\legendre{R_x}{n}$ and $\legendre{\rho(R_x)}{n}$.
But again, there could still be a random map that does work for our setting as well.

\subsection{Fast Fourier transform}
Another possible way to get a better stage 2 is by applying the fast Fourier transform (FFT), also known as magic. The FFT has many applications, one of them is fast evaluation of polynomials.

Given a polynomial $f \in R[x]$ of degree $m$, where $R$ is a ring where the FFT is possible, such as $\Z/n\Z$.
Let $a \in R$, we can compute $f(a)$ in $\bigo(m)$ multiplications and additions in $R$ by computing $a, a^2, \dots, a^m$ and then taking the appropriate sums.
So, if we are given $m$ elements $a_1, \dots, a_m \in R$ that are completely arbitrary, then surely we need at least $\bigo(m^2)$ operations in $R$ to compute the evaluations $f(a_1), \dots, f(a_m)$. Right...?

\begin{theorem}
\label{fft evaluations}
    $f(a_1), \dots, f(a_m)$ can be computed in $\bigo(m\ln(m)^2)$ operations in $R$.
\end{theorem}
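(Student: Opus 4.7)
The plan is to use a classical divide-and-conquer algorithm on a subproduct tree together with fast polynomial arithmetic. The key identity is that $f(a_i) = f(x) \bmod (x - a_i)$, so the problem reduces to simultaneous modular reduction.

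First I would build the \emph{subproduct tree}. Arrange the points $a_1, \ldots, a_m$ as leaves of a balanced binary tree; the leaf labels are the linear polynomials $(x - a_i)$, and each internal node is labelled with the product of its two children. The root is then $M(x) = \prod_{i=1}^m (x - a_i)$, which has degree $m$. At level $k$ from the leaves, there are roughly $m/2^k$ nodes each labelled by a polynomial of degree $2^k$, so one level costs $\bigo((m/2^k) \cdot M(2^k))$ operations using an FFT-based polynomial multiplication subroutine of cost $M(d) = \bigo(d \ln d)$ for degree $d$. Summing over $\log_2 m$ levels yields $\bigo(m \ln(m)^2)$ for the whole tree.

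Next I would descend the tree, maintaining at each node the value $f \bmod P$, where $P$ is the polynomial labelling that node. At the root, I compute $f \bmod M$, which (since $\deg f = m = \deg M$) is at most a single step of reduction. Passing from a node labelled $P = P_L \cdot P_R$ to its children, I simply compute $(f \bmod P) \bmod P_L$ and $(f \bmod P) \bmod P_R$; this uses fast polynomial division, whose cost on polynomials of degree $d$ is also $\bigo(d \ln d)$ via Newton iteration on power-series inverses. Summing these division costs level by level gives the same $\bigo(m \ln(m)^2)$ bound. At the leaves, the remainder modulo $(x - a_i)$ is precisely the constant $f(a_i)$, so all evaluations appear simultaneously.

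Adding the two phases yields the claimed $\bigo(m \ln(m)^2)$ operations in $R$. The main technical obstacle is the fast polynomial division step: one must justify that inversion of a polynomial modulo $x^k$, and hence Euclidean division, can be performed in $\bigo(d \ln d)$ via Newton iteration combined with FFT multiplication. This is where the hypothesis that $R$ supports FFT is actually used — in a general ring one would fall back on Schönhage–Strassen style techniques, at the cost of extra logarithmic factors. Once fast division is in hand, the level-by-level telescoping of costs is routine; I would not spell out the constant factors but only check that each level is $\bigo(m \ln m)$ and that there are $\bigo(\ln m)$ levels.
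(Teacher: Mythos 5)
Your proposal is correct and follows essentially the same route as the paper, which sketches exactly this idea (reduce $f$ modulo intermediate products of the $(x-a_i)$ before the final reductions) and defers the details to Section 9.6.3 of Crandall--Pomerance. Your writeup simply makes the subproduct-tree construction, the top-down remaindering, and the level-by-level cost accounting explicit, all of which check out.
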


\begin{proof}
    The proof uses the fact that 
    \[ f(a_i) = f(x) \bmod (x-a_i). \]
    Computing these reductions directly takes too much time because the degree of $f$ can be large.
    It turns out that gradually reducing $f$ modulo intermediate products of the linear polynomials $(x-a_i)$ before computing $f(x) \bmod (x-a_i)$ is faster.\\
    See Section 9.6.3 of \cite{crandallprimenumbers} for an actual proof. 
\end{proof}

A common stage 2 continuation of the ECM uses Theorem \ref{fft evaluations} to factor an integer $n$ as follows. Suppose again that $Q = [k]P$ is a point on an elliptic curve $E$ modulo $n$ that we got from stage 1. Suppose furthermore that $p \mid n$ is prime and the order of $Q \in E(\F_p)$ is $q < B_2$, for some prime $q$.

Define $t = \lceil \sqrt{B_2} \rceil$, now compute two lists that contain the following multiples of $Q$ (all calculations are done in $E(\Z/n\Z)$):
\begin{equation}
\label{FFT lists}
    L_1 = \{ \bigo, Q, [2]Q, [3]Q, \dots, [t-1]Q \}, \quad L_2 = \{ [t]Q, [2t]Q, \dots, [t^2]B \}.
\end{equation}

\noindent Now, $q < B_2$, hence there exists $0 \leq i,j-1 \leq t-1$ such that $q = i \cdot t - j$. Thus,
\[ [it]Q = [j]Q \text{ when viewed in } E(\F_p), \text{ where } [j]Q \in L_1 \text{ and } [it]Q \in L_2.\]
The difference of the $x$-coordinates of $[it]Q$ and $[j]Q$ will provide the factor $p$.

Construct 
\[ f = \prod_{R \in L_1} (x - R_x) \in \Z/n\Z[x]. \]
Then evaluate this polynomial using Theorem \ref{fft evaluations} in all the points $\{ R_x \mid R \in L_2 \}$ to obtain the values $b_1, \dots, b_t \in \Z/n\Z$.
Compute $\gcd(n,b_i)$ for $i = 1, \dots, t$ to find the factor $p$ of $n$.
In total this takes $\bigo(\sqrt{B_2}\ln(B_2)^2) = \bigo(B\ln(B)^2)$ operations mod $n$ if we take $B_2 = B^2$. This is slightly more than the $\bigo(B)$ operations mod $n$ needed in stage 1 of the ECM, so we need to take $B_2$ somewhat smaller, namely $B^2/\ln(B)^4$.

The choice of the sets $L_1$ and $L_2$ can be further optimized like in \cite{montgomeryECMFFT} Chapter 5, but this is the general idea.
Summarizing, we have the following proposition.

\begin{prop}
    The ECM can be extended using the FFT to factor $n$ if the point produced by stage 1 has order less than $B^2/\ln(B)^4$ in the underlying group, without increasing the asymptotic run time per curve that we try.
    \qed
\end{prop}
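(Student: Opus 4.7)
The plan is to formalize the sketch given just before the proposition. Given the stage-1 output $Q = [k]P \in E(\Z/n\Z)$ and a prime $p \mid n$ such that the order $q$ of $Q$ in $E(\F_p)$ satisfies $q < B_2 := B^2 / \ln(B)^4$, I would set $t = \lceil \sqrt{B_2} \rceil$ and build the lists $L_1, L_2$ as in \eqref{FFT lists}. The key elementary observation is that every integer $1 \le q \le t^2$ admits a representation $q = it - j$ with $1 \le i \le t$ and $0 \le j \le t-1$. Hence there must exist indices $i,j$ with $[it]Q \equiv [j]Q$ in $E(\F_p)$, which forces $([it]Q)_x \equiv ([j]Q)_x \pmod{p}$; that is, a collision occurs between an element of $L_2$ and an element of $L_1$.

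Next I would exploit this collision via polynomial evaluation. Form
\[ f(x) = \prod_{R \in L_1} (x - R_x) \in (\Z/n\Z)[x], \]
a polynomial of degree $t$, using a standard subproduct tree in $\bigo(t \ln(t)^2)$ operations mod $n$. Applying Theorem \ref{fft evaluations} to evaluate $f$ at the $t$ points $\{([it]Q)_x : 1 \le i \le t\}$ costs another $\bigo(t \ln(t)^2)$ operations mod $n$ and produces values $b_1, \dots, b_t \in \Z/n\Z$. For the correct index $i$, the factor $\bigl(([it]Q)_x - ([j]Q)_x\bigr)$ appears in $b_i$ and vanishes modulo $p$, so $\gcd(n, b_i)$ yields a nontrivial divisor of $n$. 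Scanning all $t$ indices with a GCD is negligible compared to the FFT work.

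For the runtime bookkeeping: the construction of $L_1, L_2$ uses $\bigo(\sqrt{B_2})$ curve operations, and the polynomial construction/evaluation uses $\bigo(\sqrt{B_2} \ln(B_2)^2)$ operations mod $n$. Substituting $B_2 = B^2/\ln(B)^4$ and noting that $\ln(B_2) = 2\ln(B) - 4 \ln\ln(B) = \Theta(\ln(B))$, the stage-2 cost collapses to
\[ \bigo\!\left( \frac{B}{\ln(B)^2} \cdot \ln(B)^2 \right) = \bigo(B), \]
matching the asymptotic cost of stage 1. Combined with the correctness argument above, this proves the proposition.

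The only real obstacle is verifying that the subproduct tree and the evaluation algorithm behind Theorem \ref{fft evaluations} work cleanly over $\Z/n\Z$, which is not a field. This is not a genuine problem: both routines only require additions and multiplications in the base ring, and any divisions by zero that would have been needed in a field-theoretic proof correspond precisely to the factorization of $n$ that we are trying to produce — they are absorbed into the final $\gcd$ step. This is standard in FFT-based ECM continuations, and so no new ideas beyond the sketch are required.
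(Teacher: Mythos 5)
Your proposal is correct and follows essentially the same route as the paper, which states this proposition with only a \qed{} because the argument is exactly the preceding discussion: the lists $L_1,L_2$ with $t=\lceil\sqrt{B_2}\rceil$, the representation $q=it-j$, the product polynomial evaluated via Theorem \ref{fft evaluations}, the gcd scan, and the bookkeeping $\sqrt{B_2}\ln(B_2)^2 = \bigo(B)$ for $B_2 = B^2/\ln(B)^4$. Your added remark about $\Z/n\Z$ not being a field is a reasonable (and standard) clarification but not something the paper addresses.
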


Unfortunately, when we have two forms $f_i = (a_i,b_i,c_i) \in C(-4ns)$, $i = 1,2$ that are derived from the same form in $C(-4bs)$, then we can't expect a relation like $a_1 = a_2 \bmod b$ to hold, where $n = a^2b$.
The values $a_1,a_2 \bmod b$ heavily depend on the choice of representatives.
Therefore we can't use the coefficients directly to determine whether two forms are derived from the same form.
We could check if $f_1 \cdot f_2^{-1}$ is derived from $e$ using the idea from Proposition \ref{derived from e}.
Unfortunately, the process of computing the product $f_1 \cdot f_2^{-1}$ and then reducing it can't be written as a polynomial function (as far as we know), therefore we can't use it in the FFT method.

But, perhaps we can still use the FFT method with another property of the forms. One promising idea introduces an additional factor $r^2$ again, like in Lemma \ref{r truc}.
The main idea is that the product $a_1a_2$ will be represented by $e$ in the underlying group.
We would like to stress that this factor $r$ does not necessarily have to be prime. This flexibility could be quite useful, but taking it prime makes the presentation easier.

\begin{lemma}
\label{e represents a1a2}
    Suppose $f_1,f_2 \in C(-4n)$ are derived from the same form in $C(-4b)$. Let $r$ be a prime. Lift $f_1,f_2$ to forms $g_1,g_2 \in C(-4nr^2)$ and let $h_1 = g_1^{\varphi_{-4n}(r)}$ and $h_2 = g_2^{\varphi_{-4n}(r)}$.
    Furthermore, suppose that $h_i \sim (a_i,b_i,c_i)$, $i = 1,2$.
    Then the form $e_{-4br^2}(x,y) = x^2 + br^2y^2$ represents $a_1a_2$.
\end{lemma}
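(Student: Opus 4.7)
The plan is to reduce the statement to Lemma \ref{r truc} by working with the quotient $f_1 f_2^{-1}$. Since Proposition \ref{derived behaves well} implies that the projection $\pi \colon C(-4n) \to C(-4b)$ is a group homomorphism, and since $f_1$ and $f_2$ are derived from the same form, I get $\pi(f_1 f_2^{-1}) = e \in C(-4b)$; that is, $f_1 f_2^{-1}$ is derived from $e_{-4b}$.

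Proposition \ref{derived behaves well} also shows that $g_1 g_2^{-1} \in C(-4nr^2)$ is derived from $f_1 f_2^{-1}$. Lemma \ref{r truc} is phrased for a lift constructed via the formulas in \eqref{derived forms}, but by Proposition \ref{embedding} any lift is equivalent to one of those, and the statement of the lemma depends only on the equivalence class. Applying the lemma to $f_1 f_2^{-1}$ with this particular lift then yields that
\[ (g_1 g_2^{-1})^{\varphi_{-4n}(r)} = h_1 h_2^{-1} \]
is derived from $e_{-4br^2} \in C(-4br^2)$.

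To conclude I would combine two basic facts about binary quadratic forms. First, Dirichlet composition respects representation: if $F$ represents $m_1$ and $G$ represents $m_2$, then $F \cdot G$ represents $m_1 m_2$. Second, if $F$ is derived from $G$ via $F(x,y) = G(A(x,y)^T)$ for an integer matrix $A$, then every value represented by $F$ is also represented by $G$ (just substitute $A(x_0,y_0)^T$ for $(x_0,y_0)$). Since $h_1 \sim (a_1,b_1,c_1)$ represents $a_1$ and $h_2^{-1} \sim (a_2,-b_2,c_2)$ represents $a_2$, the first fact shows $h_1 h_2^{-1}$ represents $a_1 a_2$, and the second then transfers this representation down to $e_{-4br^2}$, yielding the lemma.

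The main obstacle is really just bookkeeping. One point is to justify that Lemma \ref{r truc} applies to the ad hoc lift $g_1 g_2^{-1}$ (handled above by Proposition \ref{embedding}); the other is to verify the first fact of the final paragraph when the direct Dirichlet composition formula fails its $\gcd$ hypothesis on $(a_1, a_2, (b_1-b_2)/2)$, in which case one can simply replace $h_1$ and $h_2^{-1}$ by equivalent representatives in their classes that do satisfy the hypothesis without altering the class product.
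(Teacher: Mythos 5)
Your proposal is correct and follows essentially the same route as the paper: apply Lemma~\ref{r truc} (via the quotient $f_1 f_2^{-1}$, which is derived from $e_{-4b}$) to conclude that $h_1 h_2^{-1}$ is derived from $e_{-4br^2}$, then use that composition multiplies represented values and that a derived form's representations transfer down to the form it is derived from. The paper's proof is just a terser version of the same argument; your extra bookkeeping about the choice of lift and the $\gcd$ hypothesis for Dirichlet composition fills in details the paper leaves implicit.
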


\begin{proof}
    Using Lemma \ref{r truc}, we see that $h_1 h_2^{-1}$ is derived from $e_{-4br^2}$.
    Now, $h_1$ represents $a_1$, and $h_2^{-1}$ represents $a_2$.
    Therefore, $h_1 h_2^{-1}$ represents $a_1a_2$, hence $e_{-4br^2}$ also represents $a_1a_2$.
\end{proof}

Because of this lemma, we can instead think about what type of integers are represented by $e_{-4br^2}$ to determine which forms in $C(-4n)$ are derived from the same form.
The next proposition tells us a lot about the value of $x$ in the representation $e_{-4br^2}(x,y) = a_1a_2$, without the knowledge of $b$.
The statement is quite technical, so we will look at an example before proving it.

\begin{prop}
\label{compute x}
    Assume the assumptions from Lemma \ref{e represents a1a2}. Furthermore, suppose that $h_1,h_2$ are reduced, $r > \frac{4}{3}\sqrt{n}$ and $\gcd(r,a_1a_2) = 1$.
    Let $x_i = \sqrt{a_i} \bmod r^2$, for $i = 1,2$ and let $x_3 = \pm x_1 x_2 \bmod r^2$, where the sign is chosen such that the lift of $x_3$ to $x_4 \in \Z$ is as small as possible in absolute value.
    Then $x_4 = \pm x$, where $x$ is from the representation $e_{-4br^2}(x,y) = a_1a_2$.
\end{prop}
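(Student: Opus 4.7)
The core ingredient is Lemma \ref{e represents a1a2}, which gives integers $x,y$ with $a_1a_2 = x^2 + br^2 y^2$; reducing modulo $r^2$ yields $x^2 \equiv a_1 a_2 \pmod{r^2}$. My strategy is threefold: (i) check that $a_i$ is a quadratic residue mod $r^2$ so that $x_i$ is even well defined; (ii) identify $\pm x_1 x_2 \bmod r^2$ with $x \bmod r^2$; and (iii) bound $|x|$ by $r^2/2$ so that $\pm x$ is the symmetric lift.

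For (i) I would invoke the assigned-character machinery of Proposition \ref{character homomorphism}. Since $h_i = g_i^{\varphi_{-4n}(r)}$ and $\gcd(a_i, r) = 1$, the character $\chi_r \colon C(-4nr^2) \to \{\pm 1\}$ satisfies
\[
    \legendre{a_i}{r} = \chi_r(h_i) = \chi_r(g_i)^{\varphi_{-4n}(r)}.
\]
With $\gcd(r, n) = 1$ (which holds because $\gcd(r,a) = 1$ and in the algorithm $r$ is coprime to $b$), the exponent $\varphi_{-4n}(r) = r - \legendre{-4n}{r} \in \{r-1, r+1\}$ is even, forcing $\chi_r(h_i) = 1$. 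So $a_i$ is a QR mod $r$, and Hensel's lemma for the odd prime $r$ lifts this to a QR mod $r^2$, making $x_i$ well defined.

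For (ii) I compute $(x_1 x_2)^2 \equiv a_1 a_2 \equiv x^2 \pmod{r^2}$, so $r^2$ divides $(x_1 x_2 - x)(x_1 x_2 + x)$. Since $\gcd(a_1 a_2, r) = 1$ implies $\gcd(x,r) = 1$ and $r$ is odd, $r \nmid 2x$, so $r$ divides at most one of the two factors; the other must then be divisible by $r^2$. Thus a unique sign $\varepsilon \in \{\pm 1\}$ satisfies $\varepsilon x_1 x_2 \equiv x \pmod{r^2}$.

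For (iii) the representation $a_1 a_2 = x^2 + br^2 y^2 \geq x^2$ gives $|x| \leq \sqrt{a_1 a_2}$, and the standard reduced-form estimate $a_i \leq \sqrt{|{-}4nr^2|/3} = 2r\sqrt{n/3}$ yields $|x|^2 \leq a_1 a_2 \leq \tfrac{4}{3} n r^2$. Combined with the hypothesis on $r$, this gives $|x| \leq r^2/2$, so $x$ already lies in the symmetric range $(-r^2/2, r^2/2]$; the same interval contains $-x$. Whichever choice of sign in the definition of $x_3$ minimises $|x_4|$ therefore produces a lift of absolute value $|x|$, and since the residue $\pm x_1 x_2 \bmod r^2$ agrees with $\pm x \bmod r^2$ by (ii), we conclude $x_4 = \pm x$.

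The step I expect to be most delicate is (iii): with the coefficient $\tfrac{4}{3}$ stated, the bound $a_1 a_2 \leq \tfrac{4}{3} n r^2$ needs to be combined with the hypothesis on $r$ so that $\sqrt{\tfrac{4}{3} n r^2} \leq r^2/2$. If the generic Hermite bound is too loose for the stated constant, I would sharpen it by exploiting that $h_1 h_2^{-1}$ is derived from $e_{-4br^2}$ and hence, by Proposition \ref{derived from e composite}, has a very restricted reduced shape $(a_2^2, 2a_2 k, k^2 + (a/a_2)^2 br^2)$; writing the representation of $a_1 a_2$ through this form controls $x$ more tightly than the generic estimate on $a_1, a_2$ individually.
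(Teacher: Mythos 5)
Your proof follows essentially the same route as the paper's: the square roots $x_i$ exist because $h_1,h_2$ are squares in the class group, $x_4 \equiv \pm x \pmod{r^2}$ because both square to $a_1a_2$, and a size bound on $a_1a_2$ forces the minimal lift to coincide with $\pm x$. Your steps (i) and (ii) are in fact more careful than the paper's one-line versions (the paper neither spells out the character/Hensel argument nor notes that $u^2\equiv v^2 \pmod{r^2}$ yields $u\equiv\pm v$ only because $r$ is an odd prime not dividing $x$).

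On the point you flag in (iii): your suspicion is justified, and the discrepancy lies in the paper, not in your argument. The paper closes the inequality by asserting $a_i \leq \sqrt{4nr^2}/3$, i.e.\ $\sqrt{|D|}/3$, which gives $a_1a_2 \leq 4nr^2/9$ and then $r > \frac{4}{3}\sqrt{n}$ yields exactly $r^4/4 > 4nr^2/9$. But the correct bound for the leading coefficient of a reduced form is $a \leq \sqrt{|D|/3}$ --- the one you used --- and with $a_1a_2 \leq \frac{4}{3}nr^2$ the argument needs $r > \frac{4}{\sqrt{3}}\sqrt{n}$ rather than $\frac{4}{3}\sqrt{n}$. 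So either the hypothesis on $r$ should be strengthened to $r > \frac{4}{\sqrt{3}}\sqrt{n}$ (harmless for the algorithm, where $r$ is taken at least $\sqrt{n}$ up to a small constant and can be enlarged freely), or one must sharpen the bound on $a_1a_2$ as you propose. Your fallback via the restricted reduced shape of forms derived from $e_{-4br^2}$ would accomplish the latter, but enlarging the constant in the hypothesis is the cheaper fix.
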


\begin{example}
    \normalfont
    Let $n = 37559 = 23^2 \cdot 71$ and $r = 37273$, a prime of roughly size $n$. Let's suppose that we don't know $a = 23$ and $b = 71$, but that we do have forms $f_1 = (125,-42,304), f_2 = (40,-18,941) \in C(-4n)$ that are both derived from $(8,-2,9) \in C(-4b)$. Following Lemma \ref{e represents a1a2}, we compute 
    \[ h_1 = (5981917,450638,8731416), h_2 = (1138784,754166,45945525) \in C(-4nr^2). \]
    This is independent of the choice of lifts $g_1$ and $g_2$.
    Afterwards, we compute
    \[ x_1 = \sqrt{5981917} = 827751348 \bmod r^2, \; x_2 = \sqrt{1138784} = 467938638 \bmod r^2. \]
    Their product is $x_1x_2 = x_3 = 1386685492 \bmod r^2$, but $-x_3 = 2591037 \bmod r^2$, which is smaller.
    Therefore we have $x_4 = 2591037 \in \Z$.\\
    It can be checked that
    \[ e_{-4br^2}(x_4,1) = x_4^2 + br^2 = a_1a_2. \]
    This also means that $\gcd(x_4^2 - a_1a_2, n)$ would reveal the factor $b$ of $n$.
    Also note that $x_4$ is relatively small compared to $r^2 = 1389276529$.
\end{example}

Let's now do the proof of Proposition \ref{compute x}.

\begin{proof}
    First note that $x_1$ and $x_2$ exist mod $r^2$ since $h_1$ and $h_2$ are squares.
    Now, $x_4^2 = a_1a_2 = x^2 \bmod r^2$, hence $x_4 = \pm x \bmod r^2$.
    Therefore, we can write $x = \pm x_4 + kr^2$, for some integer $k$.
    We know that $x^2 + br^2y^2 = a_1a_2$, hence
    \[ (\pm x_4 + kr^2)^2 + br^2y^2 = a_1a_2. \]
    Now, $h_1,h_2 \in C(-4nr^2)$ are reduced, so $a_1, a_2 \leq \sqrt{4nr^2}/3$, hence $a_1a_2 \leq 4nr^2/9$.
    Also, $x_4$ was chosen as small as possible, so if $k \neq 0$, then
    \[a_1a_2 = (\pm x_4 + kr^2)^2 + br^2y^2 \geq (\pm x_4 + kr^2)^2 \geq (r^2/2)^2 = r^4/4 > 4nr^2/9 \geq a_1a_2. \]
    Thus $k = 0$, meaning $x = \pm x_4$.
\end{proof}

We can also see that if $r$ is large, say roughly the size of $n$, then 
\[ x_4 \leq \sqrt{a_1a_2} \leq \sqrt{4nr^2}/3 \approx r^{3/2}. \]
This means that given two forms $f_1, f_2 \in C(-4n)$, a necessary condition that they are derived from the same form in $C(-4b)$ is that the product $x_1x_2 \bmod r^2$ is small in absolute value, where $x_1, x_2$ and $r$ are as in Proposition \ref{compute x}.

The problem is that recognizing whether a number mod $r^2$ is small or not is also not a polynomial expression (as far as we know).
Otherwise, we could compute two lists as in \eqref{FFT lists}, containing the forms $f_i = (a_i,b_i,c_i)$, which are powers of the form produced in stage 1.
Then compute the square roots $x_i = \sqrt{a_i} \bmod r^2$, and use the FFT to find $x_i$ and $x_j$ such that $x_ix_j \bmod r^2$ is small.
Then $f_i f_j^{-1}$ would likely reveal the factor $a$ of $n$.

One other attempt to use the FFT uses that $\gcd(x_4^2 - a_1a_2, n) = b$ if $f_1$ and $f_2$ are derived from the same form (and $f_1 \neq f_2$). 
One of the problems here is that $x_4 = x_1x_2 \bmod r^2$ first has to be fully reduced mod $r^2$.
However, the gcd is with $n$ and not with $r^2$ again.
While creating the FFT polynomial, we can either work modulo $n$ or modulo $r^2$, but not both.

In summary, we were not able to find a better stage 2 for our algorithm.
Fortunately, this is by no means an exhaustive list of possible approaches for a better stage 2. Hopefully the examples given in this appendix will provide some inspiration.

\section{Implementation in \textsc{Magma}}
\label{appendix: implementation}

We implemented a basic version of Algorithm \ref{decomposition algorithm 1}, together with the stage 2 extension as presented in Algorithm \ref{algorithm generic stage 2}.
We specialized it for numbers $n$ of the form $n=p^2q$, where $p,q$ are primes of roughly the same size.
We used this code for all the tables that are present in this article.
This code should only be used as a start, it is definitely not optimized.
We also did not include a lot of error handling, nor the construction of ambiguous forms in line 17 of Algorithm \ref{decomposition algorithm 1}, since it is very unlikely to happen for integers $n$ of this form anyways, especially when $n$ is large.

\begin{verbatim}
SquareFactorization := function(n)  // n = p^2*q

    q2 := n^(1/3);  // we assume that q is roughly equal to p
    e := Sqrt(Log(q2)/Log(Log(q2)));
    B := 2*Round(q2^(1/(2*e)));  // we took B somewhat bigger,
    // not sure if it is better

    prod := 1;  // prod is the integer k
    for p in PrimesUpTo(B) do
        prod *:= p^(Ceiling(Log(p,B)));
    end for;

    B2 := 2*B*Round(Log(B));
    stage2primes := PrimesInInterval(B,B2);

    largeprime := NextPrime(Round(10*n^(1/6)));
    // this is the prime r from Algorithm 2.
    // if p/q <= 10, then this r is large enough

    s := 0;

    while true do
        s +:= 1;
        if IsSquarefree(s) eq false then
            continue;
        end if;

        // STAGE 1
        m := n*s;
        Q := QuadraticForms(-4*m);
        Q2 := QuadraticForms(-4*m*largeprime^2);

        for r in PrimesInInterval(3,1000) do
        // we search for an element in C(-4*m)
        // we expect this to work within a few tries
            if LegendreSymbol(-m,r) ne 1 then
                continue;
            end if;
            b := Integers() ! Sqrt(Integers(4*r) ! (-4*m));
            f := Q ! [r,b,(4*m+b^2) div (4*r)];
            break;
        end for;

        g := f^prod;  // this is the main part of stage 1
        // we hope that g is the identity element in 
        // the underlying group C(-4*q*s)

        h := Q2 ! [g[1]*largeprime^2, g[2]*largeprime, g[3]];
        // we lift g to the larger group C(-4mr^2)
        // this makes it possible to retrieve p^2
        h := Reduction(h);
        k := h^(largeprime - LegendreSymbol(-m,largeprime));

        d := GCD(k[1],n);
        if d gt 1 and d lt n then  // if a factor is found
            return(d);  // then return it
        end if;
        
        // STAGE 2
        k2 := k^2;
        smallpowers := [k2];
        k3 := k2;
        smallbound := Round(10*Log(B2));  // largest prime gap
        // for our tables this bound was large enough
        for i in [1..smallbound] do
            k2 *:= k3;
            Append(~smallpowers, k2);
        end for;

        previousprime := 0;
        // for each prime z in [B,B_2] we will consider k^z
        for p1 in stage2primes do
            currentprime := p1;
            if previousprime eq 0 then
                l := k^currentprime;
            else
                smallstep := (currentprime - previousprime) div 2;
                if smallstep gt smallbound then
                    "help", smallbound, smallstep;
                    break;
                else
                    // we try the next prime
                    l *:= smallpowers[smallstep];
                end if;
           end if;

            d := GCD(l[1],n);
            if d gt 1 and d lt n then
                return(d);
            end if;
            previousprime := currentprime;
        end for;

        // if stage 2 was also unsuccessful,
        // then take the next s and try again

    end while;

end function;
\end{verbatim}

\addcontentsline{toc}{section}{References}

\bibliographystyle{plainurl}
\bibliography{bibliography.bib}

\end{document}